\title[Exotic LLT at the phase transition in free products]{Exotic local limit theorems at the phase transition in free products}
\author{Matthieu Dussaule and Marc Peign\'e and Samuel Tapie}
\date{}
\newcommand{\vertiii}[1]{{\left\vert\kern-0.25ex\left\vert\kern-0.25ex\left\vert #1 
    \right\vert\kern-0.25ex\right\vert\kern-0.25ex\right\vert}}
\newcommand\Z{\mathbb{Z}}
\theoremstyle{plain}
\newtheorem{definition}{Definition}[section]
\newtheorem{proposition}[definition]{Proposition}
\newtheorem{corollary}[definition]{Corollary}
\newtheorem{theorem}[definition]{Theorem}
\newtheorem{lemma}[definition]{Lemma}
\newtheorem*{prop*}{Proposition}
\newtheorem*{lem*}{Lemma}
\newtheorem{wlemma}[definition]{Erroneous Lemma}
\theoremstyle{remark}
\newtheorem*{rem*}{Remark}
\apptocmd{\sloppy}{\hbadness 10000\relax}{}{}
\apptocmd{\sloppy}{\vbadness 10000\relax}{}{}
 \numberwithin{equation}{section}
\begin{document}

\begin{abstract}
We construct random walks on free products of the form $\Z^3*\Z^d$, with $d=5$ or $6$ which are divergent and not spectrally positive recurrent.
We then derive a local limit theorem for these random walks, proving that $\mu^{*n}(e)\sim CR^{-n}n^{-5/3}$ if $d=5$ and $\mu^{*n}(e)\sim CR^{-n}n^{-3/2}\log (n)^{-1/2}$ if $d=6$,
where $\mu^{*n}$ is the $n$th convolution power of $\mu$ and $R$ is the inverse of the spectral radius of $\mu$.
This disproves a result of Candellero and Gilch \cite{CandelleroGilch} and a result of the authors of this paper that was stated in a first version of \cite{DPT}.
This also shows that the classification of local limit theorems on free products of the form $\Z^{d_1}*\Z^{d_2}$ or more generally on relatively hyperbolic groups with respect to virtually abelian subgroups is incomplete.
\end{abstract}

\maketitle

\section{Introduction}
Let $\Gamma$ be a finitely generated group and let $\mu$ be a probability measure on $\Gamma$.
In the sequel, we will always assume that $\mu$ is finitely supported and symmetric.
Denote by $\mu^{*n}$ the $n$th convolution power of $\mu$, defined by
$$\mu^{*n}(x)=\sum_{y_1,...,y_{n-1}\in \Gamma}\mu(y_1)\mu(y_1^{-1}y_2)...\mu(y_{n-1}^{-1}x).$$
Let us consider the random walk $(X_n)_n$ driven by $\mu$, defined by $X_n=g_1...g_n$ where $g_k$ are independent random variables whose distribution are given by $\mu$.
Then, $\mu^{*n}$ is the $n$th step distribution of the random walk, so
for all $x\in \Gamma$, $\mu^{*n}(x)$ is the probability that $X_n=x$.

We will also always assume that the random walk is \emph{admissible}, i.e.\ for every $x\in \Gamma$, there exists $n$ such that $\mu^{*n}(x)$ is positive.
In other words, every element of the group can be visited with positive probability, i.e.\ the support of $\mu$ generates $\Gamma$ as a semi-group (hence as a group, since $\mu$ is symmetric).
We also say that the measure $\mu$ is admissible.

We denote by $\rho$ the spectral radius of the random walk defined by
$$\rho=\limsup \mu^{*n}(x)^{1/n}.$$
The spectral radius $\rho$ belongs to $[0,1]$ and is independent of $x$, provided that $\mu$ is admissible, see \cite[(1.8)]{Woessbook}.

\medskip
The local limit problem consists in finding the asymptotic behavior of $\mu^{*n}(x)$ as $n$ goes to infinity.
We assume for simplicity that $\mu$ is aperiodic, i.e.\ there exists $n_0$ such that for every $n\geq n_0$, $\mu^{*n}(e)>0$, where $e$ is the identity element of $\Gamma$.
In many cases, the asymptotics arising in local limit theorems are of the form
\begin{equation}\label{nonexoticLLT}
    \mu^{*n}(x)\sim C_xR^{-n}n^{-\alpha},
\end{equation}
where $R$ is the inverse of the spectral radius.
This is for example the case in all abelian groups of rank $d$, with $\alpha=d/2$, see \cite[Theorem~13.12]{Woessbook} and references therein, and more generally in all nilpotent groups of homogeneous dimension $D$, with $\alpha=D/2$, see \cite[Corollary~1.17]{Alexopoulos}.
This is also the case in all hyperbolic groups with $\alpha=3/2$, see \cite{GerlWoess}, \cite{Lalley} for the case of trees and \cite{GouezelLLT} for the general case.
Finally, to our knowledge, this was also the case so far in all known examples of relatively hyperbolic groups.

In the context of free products of the form $\Gamma=\Z^{d_1}*\Z^{d_2}$, Candellero and Gilch \cite{CandelleroGilch} gave a complete classification of every possible local limit theorem.
In particular, they proved that they always are of the form~(\ref{nonexoticLLT}), with $\alpha=3/2$ or $\alpha=d_i/2$ and the latter case can only happen if $d_i\geq 5$.
Although in this paper we will not work in the general setting of relatively hyperbolic groups, let us mention that free products are the simplest examples of such groups and results of \cite{CandelleroGilch} are being generalized to this setting in recent works by the authors, see \cite{DussauleLLT1}, \cite{DussauleLLT2}, \cite{DPT}.

\medskip
Our main goal in this note is to disprove \cite[Lemma~4.5]{CandelleroGilch} and a similar statement that appeared in a first version of \cite{DPT}.
In particular, we prove that the classification obtained in \cite{CandelleroGilch} is incomplete: we derive a local limit theorem on $\Z^3*\Z^5$ of the form~(\ref{nonexoticLLT}) but with unexpected exponent $\alpha=5/3$, and a local limit theorem on $\Z^3*\Z^6$ which is \emph{not} of the form~(\ref{nonexoticLLT}).
Before stating our main results, let us introduce some terminology.

\medskip
We consider the Green function $G(x,y|r)$ defined by
$$G(x,y|r)=\sum_{n\geq0}\mu^{*n}(x^{-1}y)r^n.$$
If $x=y=e$, we will often write $G(e,e|r)=G(r)$.
Its radius of convergence $R$ is independent of $x$ and $y$, provided $\mu$ is admissible and it is the inverse of the spectral radius of $\mu$.
All the groups under consideration in this paper will be non-amenable.
Consequently,
\begin{itemize}
    \item by a landmark result of Kesten \cite{Kesten}, $R>1$ (see also \cite[Corollary~12.5]{Woessbook}),
    \item  by a result of Guivarc'h \cite{GUivarch}, $G(R)$ is finite (see also \cite[Theorem~7.8]{Woessbook}).
\end{itemize}

\medskip
Following the notations of \cite{DussauleLLT1}, we define $I^{(k)}(r)$ by
\begin{equation}\label{defnI}
I^{(k)}(r)=\sum_{x_1,...,x_k\in \Gamma}G(e,x_1|r)G(x_1,x_2|r)...G(x_{k-1},x_k|r)G(x_k,e|r).
\end{equation}
The sums $I^{(k)}$ are related to the $k$th derivatives of the Green function.
Precisely, by \cite[Proposition~1.9]{GouezelLalley}, $I^{(1)}(r)=rG'(r)+G(r)$ and similar formulae hold for higher derivatives.
Following \cite{DussauleLLT1}, we say that the random walk driven by $\mu$ is \textit{divergent} if $I^{(1)}(R)$, or equivalently $G'(R)$, is infinite.
We say that it is \textit{convergent} otherwise.

Assume from now on that $\Gamma=\Gamma_1*\Gamma_2$. 
We  define for $i=1,2$
\begin{equation}\label{defnIki}
    I^{(k)}_{i}(r)=\sum_{x_1,...,x_k\in \Gamma_i}G(e,x_1|r)G(x_1,x_2|r)...G(x_{k-1},x_k|r)G(x_k,e|r)
\end{equation}
and we set
\begin{equation}\label{defnJ}
J^{(k)}(r)=I^{(k)}_{1}(r)+I^{(k)}_{2}(r).
\end{equation}
Still following \cite{DussauleLLT1}, we say that the random walk driven by $\mu$ is\textit{ spectrally positive recurrent} if it is divergent and if $J^{(2)}(R)$ is finite.

\medskip
For $i=1,2$, we also consider the first return kernel $p_{\Gamma_i,r}$ to $\Gamma_i$ associated with $r\mu$ (see~(\ref{defnfirstreturn}) for a proper definition).
Then, $p_{\Gamma_i,r}$ defines a transition kernel on $\Gamma_i$ and we denote by $R_i(r)$ the inverse of its spectral radius.
We say that the random walk driven by $\mu$ is \textit{spectrally degenerate} along $\Gamma_i$ if $R_i(R)=1$.
When both $R_1(R)$ and $R_2(R)$ are bigger than 1, we say that the random walk is \textit{spectrally non-degenerate}.

Roughly speaking, when the random walk is spectrally degenerate along $\Gamma_i$, the free factor $\Gamma_i$ has strong influence on its asymptotic behavior;
we refer to \cite{DussauleLLT2} and \cite{DG21} for further details.
This notion should be compared with what is called "typical case" in \cite{Woessbook}, where another way of measuring influence of a free factor is given.
By \cite[Proposition~2.9]{DussauleLLT2}, these two notions coincide, i.e.\  the "typical case" corresponds to the case of a spectrally non-degenerate random walk.

\medskip
All these quantities and definitions can be generalized to the context of relatively hyperbolic groups, replacing free factors with the appropriate notion of maximal parabolic subgroups.
The current classification of local limit theorems on relatively hyperbolic groups is as follows.
When the random walk is spectrally non-degenerate, the local limit has the form~(\ref{nonexoticLLT}), with $\alpha=3/2$ \cite{DussauleLLT2}.
This was first proved by Woess \cite{WoessLLT} for random walks on free products in the "typical case" situation.
When the random walk is spectrally positive recurrent, we can only prove the rough estimate $\mu^{*n}(e)\asymp R^{-n}n^{-3/2}$, which means that the ratio of the quantities on the left and right hand-side is bounded away from 0 and infinity \cite{DussauleLLT1}.
When the random walk is convergent and parabolic subgroups are virtually abelian, the local limit theorem has the form~(\ref{nonexoticLLT}), with $\alpha=d/2$, where $d$ is the minimal rank of a parabolic subgroup along which the random walk is spectrally degenerate \cite{DPT}.
Moreover, in this situation, one can only have $d\geq 5$.

Thus, we recover so far the classification given in \cite{CandelleroGilch} and presented above.
Furthermore, up to the present paper, for free products of the form $\Z^{d_1}*\Z^{d_2}$ the case of a divergent and not spectrally positive recurrent random walk was considered as not being able to occur, see \cite[Lemma~4.5]{CandelleroGilch}.
As announced, we disprove here this result and we actually construct such a random walk on $\Gamma=\Z^3*\Z^d$, with $d=5$ or $6$.
As a consequence, the classification of possible behaviors of $\mu^{*n}$ needs to be completed.
We also derive a local limit theorem for the random walk we construct.
This is the first step into this program.

\begin{theorem}\label{maintheoremconstruction}
Let $\Gamma=\Gamma_1*\Gamma_2$, with $\Gamma_1=\Z^3$, $\Gamma_2=\Z^d$ and $d\in \{5,6\}$.
For $i=1,2$, let $\mu_i$ be a finitely supported, admissible and symmetric probability measure on $\Gamma_i$.
For $\alpha\in [0,1]$, let $\mu_\alpha$ be the probability measure $\mu_\alpha=\alpha\mu_1+(1-\alpha)\mu_2$ on $\Gamma$.
Then, there exists $\alpha_*\in (0,1)$ such that the random walk driven by $\mu_{\alpha_*}$ is divergent and not spectrally positive recurrent.
\end{theorem}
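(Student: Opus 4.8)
The plan is to produce $\alpha_*$ as a critical value of the parameter separating a \emph{convergent} from a \emph{divergent} regime inside the one-parameter family $(\mu_\alpha)_{\alpha\in(0,1)}$, and then to check that at this critical value the walk is spectrally degenerate along the free factor $\Gamma_2=\Z^d$; the constraint $d\le 6$ will then force $J^{(2)}(R)=+\infty$, so that $\mu_{\alpha_*}$, being divergent, cannot be spectrally positive recurrent.

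First I would collect the analytic input. Using the first-return decomposition of the Green function on $\Gamma=\Gamma_1*\Gamma_2$ — the classical formulae expressing $G(e,e|r)$ through the Green functions of the first-return kernels $p_{\Gamma_i,r}$ on $\Gamma_i$ — together with standard perturbation theory, one checks that $\alpha\mapsto R(\alpha)$ and $\alpha\mapsto R_i(R(\alpha))$ are continuous on $(0,1)$ and that $R(\alpha)\to 1$ as $\alpha\to 0^+$ or $\alpha\to 1^-$. One also records the behaviour of $G(e,x|R)$ along a free factor: when the walk is spectrally degenerate along $\Gamma_i$, the kernel $p_{\Gamma_i,R}$ is a finite-range, centred, spectral-radius-one kernel on $\Z^{d_i}$, so the classical local limit theorem gives $G(e,x|R)\asymp |x|^{-(d_i-2)}$ for $x\in\Gamma_i\setminus\{e\}$, while if $R_i(R(\alpha))>1$ then $G(e,x|R)$ decays exponentially along $\Gamma_i$.

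Next I would treat the two endpoints. As $\alpha\to 0^+$, $\mu_\alpha$ is a small perturbation of the walk $\mu_2$ on $\Z^d$; since $d\ge 5$ one has $\sum_n n\,\mu_2^{*n}(e)<\infty$, and I would deduce that for $\alpha$ small $\mu_\alpha$ is spectrally degenerate along $\Gamma_2$ and convergent, $I^{(1)}(R(\alpha))$ being dominated by its single-$\Z^d$-syllable part $\sum_{x\in\Z^d}G(e,x|R)^2\asymp\sum_{x\in\Z^d}|x|^{-2(d-2)}<\infty$ (here $2(d-2)>d$ because $d>4$). As $\alpha\to 1^-$, $\mu_\alpha$ is a small perturbation of the walk $\mu_1$ on $\Z^3$, for which $\sum_n n\,\mu_1^{*n}(e)=\infty$, and I would deduce that $\mu_\alpha$ is divergent for $\alpha$ close to $1$. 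Hence $C:=\{\alpha\in(0,1):\mu_\alpha\text{ is convergent}\}$ contains a right neighbourhood of $0$ and is bounded away from $1$; set $\alpha_*:=\sup C\in(0,1)$. Two properties of $\mu_{\alpha_*}$ remain. (i) It is spectrally degenerate along $\Gamma_2$: a convergent walk on $\Gamma_1*\Gamma_2$ is, by \cite{DPT}, spectrally degenerate along some free factor, and it cannot be spectrally degenerate along $\Gamma_1=\Z^3$, since that would give $I^{(1)}(R)\ge\sum_{x\in\Z^3}G(e,x|R)^2\asymp\sum_{x\in\Z^3}|x|^{-2}=\infty$; thus $C\subseteq\{\alpha:R_2(R(\alpha))=1\}$, a closed set, which therefore contains $\alpha_*$. (ii) It is divergent: this is the assertion that $C$ is open, i.e.\ that convergence survives small perturbations of $\alpha$, which I would obtain from a quantitative version of the endpoint estimate, showing that on $\{\,R_2(R(\alpha))=1\,\}$ convergence is governed by a strict inequality on the remaining ($\Z^3$-side) spectral data, an inequality that fails at $\alpha_*$.

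Granting (i) and (ii), spectral degeneracy of $\mu_{\alpha_*}$ along $\Gamma_2=\Z^d$ yields, using the local estimate above,
\[
J^{(2)}(R)\ \ge\ I^{(2)}_2(R)\ \asymp\ \sum_{x_1,x_2\in\Z^d}|x_1|^{-(d-2)}\,|x_1-x_2|^{-(d-2)}\,|x_2|^{-(d-2)}\ =\ +\infty,
\]
since the inner convolution is $\asymp|x_2|^{-(d-4)}$ (as $d>4$) and $\sum_{x_2\in\Z^d}|x_2|^{-(2d-6)}$ diverges precisely when $2d-6\le d$, i.e.\ when $d\le 6$ (the case $d=6$ being borderline, which is also where the logarithmic correction in the statement appears). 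Thus $\mu_{\alpha_*}$ is divergent with $J^{(2)}(R)=+\infty$, hence not spectrally positive recurrent. The main obstacle is step (ii): one must follow precisely how the system of implicit equations determining $G$ near its singularity $R$ degenerates as $\alpha\uparrow\alpha_*$, showing that the mechanism keeping the $\Z^d$-contribution to $I^{(1)}(R)$ finite (which relies on $d\ge5$) persists up to $\alpha_*$, while the $\Z^3$-contribution, finite exactly when $R_1(R(\alpha))>1$, ceases to be summable there; making these estimates uniform in $\alpha$ near $\alpha_*$ is the technical heart of the argument.
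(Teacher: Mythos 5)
Your overall strategy — locate a phase transition in $\alpha$ between a convergent regime near $0$ and a divergent regime near $1$, then use $d\le 6$ to force $J^{(2)}(R)=\infty$ at the transition — is the same as the paper's, and your final computation (the convolution estimate showing $I^{(2)}_2(R)=\infty$ exactly when $d\le 6$, given $G(e,x|R)\asymp |x|^{-(d-2)}$ on the degenerate factor) is a correct alternative to the paper's route via $G_2''(R_2)=\infty$ and the identity relating $J^{(2)}$ to the second derivatives of the $G_i$. But there is a genuine gap at your step (ii), which you yourself flag as ``the technical heart'' and then do not carry out: you must show that the walk at $\alpha_*=\sup C$ is \emph{divergent}, i.e.\ that the convergent set $C$ is open at its right endpoint. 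Nothing in your argument rules out the alternative that convergence persists at $\alpha_*$; and convergence is \emph{not} in any obvious way a strict inequality on directly accessible Green-function data, because $I^{(1)}(R)=\sum_{x\in\Gamma}G(e,x|R)G(x,e|R)$ runs over the whole free product, not over a single syllable. This also undermines your endpoint analysis near $\alpha=0$: the claim that $I^{(1)}(R)$ is ``dominated by its single-$\Z^d$-syllable part'' is false in general — indeed at $\alpha_*$ itself one has $I^{(1)}_2(R)<\infty$ (since $2(d-2)>d$) while $I^{(1)}(R)=\infty$, so finiteness of the single-syllable sum never by itself yields convergence.

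The paper closes exactly this gap with the machinery of Section~2: using Woess's functions $\Phi,\Psi$ and the quantities $\theta=RG(R)$, $\overline\theta=\min_i\theta_i/\alpha_i$, it proves (Corollary~\ref{trichotomy}, resting on \cite[Theorem~9.22]{Woessbook}) that convergence is equivalent to the strict inequality $\Psi(\overline\theta)>0$, divergence to $\Psi(\overline\theta)\le 0$, and the divergent-and-degenerate case to the single equation $\Psi(\overline\theta)=0$, where $\alpha\mapsto\Psi(\overline\theta)$ is explicitly continuous (it equals $\Psi_1(\tfrac{\alpha}{1-\alpha}\theta_2)+\Psi_2(\theta_2)-1$ for $\alpha<\alpha_c$ and $\Psi_1(\theta_1)+\Psi_2(\tfrac{1-\alpha}{\alpha}\theta_1)-1$ for $\alpha>\alpha_c$). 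With $d_1=3$ one gets $\Psi_1(\theta_1)=0$, hence $\Psi(\overline\theta)<0$ on $[\alpha_c,1)$, while $\Psi(\overline\theta)=\Psi_2(\theta_2)>0$ at $\alpha=0$ because $G_2'(R_2)<\infty$ for $d\ge 5$; the intermediate value theorem then lands exactly on $\Psi(\overline\theta)=0$, which \emph{is} the divergent, spectrally degenerate regime — no separate openness argument is needed. To repair your write-up you would either have to import this trichotomy (or an equivalent characterization of convergence as an open condition in $\alpha$), or supply the uniform-in-$\alpha$ analysis of the implicit system near the singularity that you only sketch; as it stands, the existence of a \emph{divergent} walk at the critical parameter — the whole point of the theorem — is asserted rather than proved.
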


When either $\mu_1$ or $\mu_2$ is aperiodic, the same property holds for $\mu_\alpha$.
From now on, in order to simplify the argument, we assume that both measures $\mu_1$ and $\mu_2$ are aperiodic; this allows us to avoid to consider several sub-cases for the estimation of the Green functions associated with the corresponding random walks on $\Gamma_1$ and $\Gamma_2$, see~(\ref{asymptoticG''oddcase}) and~(\ref{asymptoticG''evencase}) below.

\begin{theorem}\label{maintheoremLLT}
 Assume that the measures $\mu_1$ and $\mu_2$ are aperiodic.
 Then, the random walk on $\Gamma$ driven by $\mu_{\alpha_*}$ given by Theorem~\ref{maintheoremconstruction}
satisfies the following local limit theorem: 
If $d=5$, we have
$$\mu_{\alpha_*}^{*n}(e)\sim CR^{-n}n^{-5/3}$$
and if $d=6$, we have
$$\mu_{\alpha_*}^{*n}(e)\sim C R^{-n}n^{-3/2}\log(n)^{-1/2},$$
where $R$ is the inverse of the spectral radius of $\mu_{\alpha_*}$.
\end{theorem}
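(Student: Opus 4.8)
\medskip
\noindent\textbf{Sketch of the approach.}
The plan is to pin down the behaviour of the Green function $G(r)=G(e,e|r)=\sum_{n\ge 0}\mu_{\alpha_*}^{*n}(e)\,r^n$ near its radius of convergence $R$ and then to conclude by a singularity-analysis transfer theorem of Flajolet--Odlyzko type, in the version adapted to random walks and allowing logarithmic factors (compare \cite{GouezelLLT}). Concretely, I would aim to show that in a slit neighbourhood of $R$,
$$G(r)=G(R)-C\,(R-r)^{2/3}+o\big((R-r)^{2/3}\big)\quad\text{if }d=5,$$
$$G(r)=G(R)-C\,(R-r)^{1/2}\big(\log\tfrac{1}{R-r}\big)^{-1/2}+o\big((R-r)^{1/2}(\log\tfrac{1}{R-r})^{-1/2}\big)\quad\text{if }d=6.$$
Since the exponent $2/3$ is not a nonnegative integer, and since aperiodicity of $\mu_{\alpha_*}$ (which follows from that of $\mu_1,\mu_2$) makes $R$ the unique singularity of $G$ on the circle $|r|=R$, the transfer theorem then yields $\mu_{\alpha_*}^{*n}(e)\sim C'R^{-n}n^{-5/3}$ when $d=5$ and $\mu_{\alpha_*}^{*n}(e)\sim C'R^{-n}n^{-3/2}(\log n)^{-1/2}$ when $d=6$.

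To obtain this expansion, the first step is to reduce to the Green functions of the free factors. By $\Gamma_i$-equivariance the first-return kernel $p_{\Gamma_i,r}$ to $\Gamma_i=\Z^{d_i}$ is a convolution kernel on $\Z^{d_i}$; in fact it equals $\ell_i(r)$ times a ``lazy'' probability measure $p_i=\lambda_i(r)\mu_i+(1-\lambda_i(r))\delta_0$, so that $G(x,y|r)=G_{p_i}(x,y|\ell_i(r))$ for $x,y\in\Gamma_i$ — in particular $G(r)=G_{p_1}(0,0|\ell_1(r))=G_{p_2}(0,0|\ell_2(r))$ — and the return masses satisfy $\ell_1(r)+\ell_2(r)=r+1-1/G(r)$, while $\lambda_i(r)$ is determined analytically from $r$, $\ell_i$ and $G$. (Since $\Z^{d_i}$ is amenable, the spectral radius of $p_{\Gamma_i,r}$ equals its total mass $\ell_i(r)$, so $R_i(R)=1$ is the same as $\ell_i(R)=1$.) This gives a closed coupled implicit system for $(G,\ell_1,\ell_2)$. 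Into it I would feed the classical local expansion near $s=1$ of the Green function of a finitely supported symmetric aperiodic walk on $\Z^m$: for $m$ odd it is a polynomial in $(1-s)$ plus $\kappa(1-s)^{(m-2)/2}$ plus lower-order terms, while for $m$ even the singular term carries an extra factor $\log\tfrac{1}{1-s}$ (see \cite{Woessbook} and the references therein). A technical point that makes the next step legitimate is that these expansions hold in a complex slit neighbourhood of $1$ and uniformly for the laziness parameter in compact subintervals of $(0,1)$, so the $r$-dependence of $p_{\Gamma_i,r}$ enters only as a tame analytic perturbation.

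The decisive structural input, coming from the proof of Theorem~\ref{maintheoremconstruction}, is that at $\alpha=\alpha_*$ the walk is spectrally degenerate along $\Gamma_2=\Z^d$ and only along it, i.e.\ $\ell_2(R)=1$ while $\ell_1(R)<1$; indeed, using $I^{(k)}_i(r)=I^{(k)}_{p_i}(\ell_i(r))$, the Gou\"ezel--Lalley formulae, and $p_i^{*n}(0)\asymp\ell_i(R)^n\,n^{-d_i/2}$, one sees that $J^{(2)}(R)<\infty$ amounts to spectral non-degeneracy (as $d_1=3<7$ and $d\le 6<7$), so a divergent walk that is not spectrally positive recurrent must be spectrally degenerate along a factor, and the construction forces it to be $\Z^d$. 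Thus $\ell_1(r)$ stays away from $1$ as $r\to R$, the map $\ell_1\mapsto G_{p_1}(0,0|\ell_1)$ is analytic near $\ell_1(R)$, and one can solve the $\Gamma_1$-relation for $\ell_1$ as an analytic function of $(G,r)$ and — using $\ell_1+\ell_2=r+1-1/G$ — write $u:=1-\ell_2(r)=W(G,r)$ with $W$ analytic near $(G(R),R)$ and $W(G(R),R)=0$. On the $\Gamma_2$ side, inverting $G=G_{p_2}(0,0|1-u)$ by means of the $\Z^d$ expansion writes $u=[\,\text{analytic in }(G,r)\,]+\kappa(G,r)\,\mathcal H(G,r)^{(d-2)/2}+\cdots$ (with an extra $\log$ when $d=6$), where $\mathcal H$ is analytic and $\mathcal H(G(R),R)=0$. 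Equating the two expressions for $u$ leaves a single scalar equation $\mathcal F(G,r)=\kappa(G,r)\,\mathcal H(G,r)^{(d-2)/2}(1+o(1))$ (resp.\ with a $\log$), with $\mathcal F,\mathcal H,\kappa$ analytic and $\mathcal F(G(R),R)=\mathcal H(G(R),R)=0$. Here divergence forces $\partial_G\mathcal F(G(R),R)=0$: otherwise the right-hand side is of lower order than the linear part of $\mathcal F$ (because $(d-2)/2>1$), the implicit function theorem gives an analytic solution $r\mapsto G(r)$, and the walk is convergent. Granting the genericity $\partial_r\mathcal F\ne 0$ and $\partial_G\mathcal H\ne 0$ at $(G(R),R)$, and writing $\delta=G(R)-G(r)$, $\varepsilon=R-r$, the equation reads to leading order $\varepsilon\asymp\mathcal H^{(d-2)/2}\asymp\delta^{(d-2)/2}$ (since $|\mathcal H|\asymp\delta\gg\varepsilon$), i.e.\ $\delta\asymp\varepsilon^{2/(d-2)}$; this is $\delta\asymp\varepsilon^{2/3}$ for $d=5$, and for $d=6$ the logarithm turns $\delta^2\log(1/\delta)\asymp\varepsilon$ into $\delta\asymp\big(\varepsilon/\log(1/\varepsilon)\big)^{1/2}$. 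A bootstrap then upgrades $\asymp$ to the precise asymptotics above, and running the same argument in the complex domain supplies the analyticity of $G$ in a slit neighbourhood of $R$ required by the transfer theorem.

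The main obstacle is precisely this last step — the analysis of the scalar implicit equation — and in particular translating the qualitative hypotheses ``divergent'' and ``not spectrally positive recurrent'' into the exact local vanishing pattern of $\mathcal F$ and $\mathcal H$ at $(G(R),R)$, namely $\partial_G\mathcal F=0$ together with $\partial_r\mathcal F\ne 0$ and $\partial_G\mathcal H\ne 0$, and then controlling all error terms rigorously through the bootstrap, the $d=6$ case with its nested logarithm being the more delicate. A secondary technical difficulty is obtaining the $\Z^m$ Green function expansions uniformly in the laziness parameter and over a complex sector, which underpins the analytic-perturbation reduction of the second step.
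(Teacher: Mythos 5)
Your target expansions for $G$ near $R$ are the correct ones (they are equivalent to the estimates $G'(r)\sim C(R-r)^{-1/3}$ for $d=5$ and $G'(r)\sim C(-(R-r)\log(R-r))^{-1/2}$ for $d=6$ that the paper actually derives, see (\ref{finaldifferentialodd}) and (\ref{finaldifferentialeven})), and your structural reduction is sound: your coupled system for $(G,\ell_1,\ell_2)$ is a reparametrization of Woess's $\Phi,\Psi$ formalism, your condition $\partial_G\mathcal F(G(R),R)=0$ is exactly the divergence condition $\Psi(\overline\theta)=0$, and your leading-order balance $\varepsilon\asymp\delta^{(d-2)/2}$ (with the logarithm for $d=6$) reproduces the paper's relations $(G(R)-G(r))^{3/2}\sim C(R-r)$ and $(G(R)-G(r))^2\log\frac1{G(R)-G(r)}\sim C(R-r)$. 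The route is nevertheless genuinely different. The paper never inverts an implicit equation for $G$ itself: it establishes asymptotic \emph{differential} equations on the real axis — Proposition~\ref{firstdifferential} and Lemma~\ref{seconddifferential} combine into Corollary~\ref{thirddifferential}, $G''(r)/G'(r)^2\sim C\frac{d}{dr}(r^2I_2^{(1)}(r))$ — integrates them between $r$ and $R$ as in (\ref{twoterms}), and feeds in the real singular expansions (\ref{asymptoticG''oddcase}), (\ref{asymptoticG''evencase}) of $G_2''$ only on the interval $[\zeta_2(r),1]$. The conclusion is then Tauberian: \cite[Theorem~9.1]{GouezelLalley} for $d=5$, and for $d=6$ a direct application of Karamata \cite[Corollary~1.7.3]{BinghamGoldieTeugels} made legitimate by the Gou\"ezel--Lalley decomposition (\ref{qnmun}), $\mu^{*n}(e)R^n=q_n+O(e^{-\beta n})$ with $q_n$ monotone. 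Your plan, by contrast, is the Darboux/Flajolet--Odlyzko strategy of \cite{CandelleroGilch}. What your approach buys is a self-contained singular expansion of $G$ at $R$; what the paper's approach buys is that every estimate is one-dimensional and real, so no control of $G$ off the real axis is ever needed.

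The one place where your proposal has a genuine hole rather than a deferred computation is the passage from the real expansion to the coefficient asymptotics. A Tauberian theorem applied to your real-axis expansion of $G$ (or $G'$) only gives asymptotics of the Ces\`aro sums $\sum_{k\le n}\mu^{*k}(e)R^k$, not of the individual terms, because $\mu^{*n}(e)R^n$ is not monotone; so you must either (a) prove the expansion in a full $\Delta$-domain and invoke transfer, or (b) import a monotonicity statement such as \cite[Corollary~9.4]{GouezelLalley}. You have chosen (a) and dispose of it with ``running the same argument in the complex domain supplies the analyticity of $G$ in a slit neighbourhood of $R$'', but this is precisely the hard part of that route: you need the lazy-$\Z^{d}$ Green function expansions uniformly over a complex sector and uniformly in the laziness parameter, you need the implicit equation $\mathcal F=\kappa\mathcal H^{(d-2)/2}$ (with its non-analytic right-hand side and, for $d=6$, a logarithm) to admit a controlled solution branch off the real axis, and you need aperiodicity to exclude other singularities on $|r|=R$ — none of which follows from the real-axis bootstrap. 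Until that is done, or until you substitute the monotonicity input (b), the final step does not close. Everything upstream of this point, including the identification of the exponents $2/3$ and $1/2$ with the logarithmic correction, is correct and consistent with the paper.
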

Without assuming aperiodicity, the same asymptotics hold for $\mu^{*2n}$, since $\mu$ is symmetric, so its period must be 1 or 2.

\medskip
Let us state that to our knowledge, the asymptotic for $d=6$ in Theorem~\ref{maintheoremLLT} gives the first example of a local limit theorem on a non-amenable group which is not of the form~(\ref{nonexoticLLT}).
For amenable groups, the situation is quite different and there exist many examples where $\mu^{*n}(e)$ behaves like $\mathrm{exp}(-n^c)$.

 Up to a sub-exponential error term, this is the case for all polycyclic groups of exponential growth \cite{V91}, \cite[Theorem~1]{Alexopoulospolycyclic} and for amenable Baumslag-Solitar groups \cite[Theorem~5.2~(5.2)]{CGP}, with $c=1/3$.
This is also the case for lamplighter groups of the form $A\wr \Z^d$, where $A$ is a finite non-trivial group \cite[Theorem~5.2~(5.6)]{CGP}, with $c=d/(d+2)$.
Note that amenable Baumslag-Solitar groups and lamplighter groups are examples of solvable non-polycyclic groups.

For $d=1$, a precise local limit theorem for the lamplighter group of the form $\mu^{*n}(e)\sim C n^{1/6}\mathrm{exp}(-n^{1/3})$ was proven by Revelle \cite{Revelle}.
This was further extended to Diestel-Leader graphs $\mathrm{DL}(q,r)$ by Bartholdi, Neuhauser and Woess, see \cite[Theorem~5.4]{BW} and \cite[Corollary~5.26]{BNW}.
Diestel-Leader graphs are not amenable when $q\neq r$, since the spectral radius of the simple random walk is smaller than 1, see \cite[(1.3)]{BW}.
Thus, the examples of \cite{BW} and \cite{BNW} already  provide local limit theorems which are not of the form~(\ref{nonexoticLLT}) but of the form
$\mu^{*n}(e)\sim CR^{-n}\mathrm{exp}(-n^c)n^{\alpha}$ for non-amenable graphs.
However, according to \cite[Theorem~1.4]{EFW} when $q\neq r$, $\mathrm{DL}(q,r)$ is not quasi-isometric to the Cayley graph of a finitely generated group.

We also refer to \cite{BZ} where many other examples are given, beyond the class of amenable groups.
Asymptotics are only given there for $-\log \mu^{*n}(e)$ though. Thus, for non-amenable groups, these examples only recover the fact that $R>1$.

\medskip
We now briefly outline the content of our paper.
In Section~\ref{Sectioncharacterization}, we give various characterizations of spectral degeneracy in terms of quantities that are suited to the study of random walks on free products.
Along the way, we introduce functions and quantities defined in \cite{Woessbook}.
The conclusion of this section is a useful characterization of spectral degeneracy and divergence in terms of the sign of a single quantity, see precisely Corollary~\ref{trichotomy}.

In Section~\ref{sectionconstruction}, we use Corollary~\ref{trichotomy} to prove Theorem~\ref{maintheoremconstruction}, that is, we construct a probability measure $\mu$ on $\Z^3*\Z^d$, $d=5$ or $6$, which is divergent but not spectrally positive recurrent.
We will actually construct a family of probability measure $\mu_\alpha$ and exhibit a phase transition at some $\alpha_*$.
The measure $\mu_{\alpha_*}$ will have the required properties.

Finally, Section~\ref{SectionLLT} is devoted to derive a local limit theorem for $\mu_{\alpha_*}$, thus proving Theorem~\ref{maintheoremLLT}.
This is done by first finding precise asymptotics of the derivative of the Green function $G_r(e,e)$ as $r\to R$ and then using Karamata's Tauberian theorem.
Most of the intermediate results in this section are of geometric nature and we believe it should be possible to extend them to relatively hyperbolic groups, with (possibly challenging) new arguments replacing those that rely on the combinatorial structure of free products.

\section{Characterizations of spectral degeneracy in free products}\label{Sectioncharacterization}

Let $\Gamma=\Gamma_1*\Gamma_2$ be a free product of two groups.
Consider finitely supported, symmetric and admissible probability measures $\mu_1$ and $\mu_2$ on $\Gamma_1$ and $\Gamma_2$ respectively.
For $\alpha\in [0,1]$, set
$$\mu_\alpha=\alpha\mu_1+(1-\alpha)\mu_2.$$
In the sequel, we write $\mu$ for $\mu_\alpha$ and we set $\alpha_1=\alpha$ and $\alpha_2=(1-\alpha)$.
If $\alpha_i>0$, the probability measure $\mu$ is finitely supported, symmetric and admissible on $\Gamma$.
Such a probability measure is called adapted to the free product structure.
We denote by $R$ the inverse of the spectral radius of $\mu$ and by $R_i$ the inverse of the spectral radius of $\mu_i$.

The Green functions $G$, $G_1$ and $G_2$ of $\mu$, $\mu_1$ and $\mu_2$ respectively are related as follows.
For $i=1,2$, for every $x,y\in \Gamma_i$, for every $r\leq R$,
\begin{equation}\label{formulaWoess}
\frac{G(x,y|r)}{G(e,e|r)}=\frac{G_{i}(x,y|\zeta_i(r))}{G_{i}(e,e|\zeta_i(r))},
\end{equation}
where $\zeta_i$ is a continuous function of $r$, see \cite[Proposition~9.18]{Woessbook} for an explicit formula.
We always have $\zeta_i(R)\leq R_i$ and for $r<R$, $\zeta_i(r)<R_i$.

\medskip
We denote by $p_{\Gamma_i,r}$ the first return transition kernel to $\Gamma_i$ associated with $r\mu$, which is defined as
\begin{equation}\label{defnfirstreturn}
p_{\Gamma_i,r}(x,y)=\sum_{n\geq 1}\hspace{.1cm}\sum_{z_1,...,z_n\notin \Gamma_i}r^n\mu(x^{-1}z_1)\mu(z_1^{-1}z_2)...\mu(z_n^{-1}y).
\end{equation}
We denote by $G_{\Gamma_i,r}$ the Green function associated with $p_{\Gamma_i,r}$.
By \cite[Lemma~4.4]{DG21}, for every $x,y\in \Gamma_i$, it holds
\begin{equation}\label{Greencoincide}
G_{\Gamma_i,r}(x,y|1)=G(x,y|r),
\end{equation}
which is actually the main reason for introducing $p_{\Gamma_i,r}$.

In fact, because $\mu$ is adapted to the free product structure, if the random walk ever leaves $\Gamma_i$ at some point $x$, it can only come back to $\Gamma_i$ at the same point $x$.
We deduce that the first return kernel $p_{\Gamma_i,r}$ can be written in our context as
$$p_{\Gamma_i,r}(e,x)=\alpha_ir\mu_i(x)+w_i\delta_{e,x},$$
where $w_i=w_i(r)$ is the weight of the first return to $e$ associated to $r\mu$, starting with a step driven by $\alpha_j\mu_j$, $j\neq i$.
Thus, \cite[Lemma~9.2]{Woessbook} shows that for any $x,y\in \Gamma_i$,
\begin{equation}\label{superWoess}
G_{\Gamma_i,r}(x,y|t)=\frac{1}{1-w_it}G_{i}\left (x,y\ \bigg|\ \frac{\alpha_irt}{1-w_it}\right ).
\end{equation}
In particular, for $t=1$,
$$G_{\Gamma_i,r}(x,y|1)=\frac{1}{1-w_i}G_{i}\left (x,y\ \bigg|\ \frac{\alpha_ir}{1-w_i}\right )$$
Since $G_{\Gamma_i,r}(x,y|1)=G(x,y|r)$, we recover~(\ref{formulaWoess}) with
$$\zeta_i(r)=\frac{\alpha_ir}{1-w_i}.$$

Recall that following \cite{DG21}, we say that the random walk is spectrally degenerate along $\Gamma_i$ if the spectral radius of the first return kernel $p_{\Gamma_i,R}$ is 1.
In this section, we prove equivalent conditions to spectral degeneracy, using the more standard terminology for free products introduced in \cite[Chapter~9]{Woessbook}.

The following characterization is proved in \cite[Proposition~2.9]{DussauleLLT2}. We detail it here for convenience.

\begin{lemma}\label{firstequivalence}
The random walk is spectrally degenerate along $\Gamma_i$ if and only if $\zeta_i(R)=R_i$.
\end{lemma}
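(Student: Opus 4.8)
The plan is to read off both implications from the explicit identity~(\ref{superWoess}) specialised at $r=R$, together with the observation that $R_i(R)$, being the inverse of the spectral radius of the first-return kernel $p_{\Gamma_i,R}$ on $\Gamma_i$, is exactly the radius of convergence of the power series $t\mapsto G_{\Gamma_i,R}(e,e|t)$.

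Before starting I would record a few elementary facts. Since $\zeta_i(R)=\alpha_iR/(1-w_i(R))$ satisfies $\zeta_i(R)\le R_i<\infty$ and $\alpha_iR>0$, we have $0\le w_i(R)<1$; hence $\varphi(t):=\alpha_iRt/(1-w_i(R)t)$ is a continuous, strictly increasing bijection of $[0,1/w_i(R))$ onto $[0,\infty)$ with $\varphi(1)=\zeta_i(R)$. Moreover $G_i(e,e|\,\cdot\,)=\sum_n\mu_i^{*n}(e)(\,\cdot\,)^n$ is a power series with nonnegative coefficients of radius of convergence exactly $R_i$, so by Pringsheim it is finite for real arguments in $[0,R_i)$ and equal to $+\infty$ for real arguments larger than $R_i$. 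Plugging this into~(\ref{superWoess}), for $t<1/w_i(R)$ we have $G_{\Gamma_i,R}(e,e|t)=(1-w_i(R)t)^{-1}\,G_i(e,e|\varphi(t))$ with finite positive prefactor, so $G_{\Gamma_i,R}(e,e|t)$ is finite for $t$ below the unique $t_0\in(0,1/w_i(R))$ solving $\varphi(t_0)=R_i$ and is $+\infty$ for $t\in(t_0,1/w_i(R))$; in particular the pole of the prefactor at $1/w_i(R)$ is never the binding singularity. Hence the radius of convergence $R_i(R)$ of $t\mapsto G_{\Gamma_i,R}(e,e|t)$ equals $t_0=\varphi^{-1}(R_i)$, and since $\varphi$ is increasing with $\varphi(1)=\zeta_i(R)$ this gives $R_i(R)=1\iff\zeta_i(R)=R_i$. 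As spectral degeneracy along $\Gamma_i$ means precisely $R_i(R)=1$, the lemma follows.

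I expect the only genuinely delicate point to be the identification of $R_i(R)$ with the radius of convergence of $t\mapsto G_{\Gamma_i,R}(e,e|t)$: one must note that $p_{\Gamma_i,R}(e,x)=\alpha_iR\,\mu_i(x)+w_i(R)\,\delta_{e,x}$ is an \emph{irreducible} transition kernel on $\Gamma_i$ (because $\mu_i$ is admissible on $\Gamma_i$ and $\alpha_iR>0$), so that, exactly as for a random walk on a group, its spectral radius does not depend on the base point and is the inverse of the radius of convergence of $G_{\Gamma_i,R}(e,e|\,\cdot\,)$. Everything else is a short manipulation of the Möbius map $\varphi$ together with Pringsheim's theorem, so no serious obstacle remains.
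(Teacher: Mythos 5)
Your proof is correct and follows essentially the same route as the paper: both read the equivalence off formula~(\ref{superWoess}) together with the monotonicity of the M\"obius map $t\mapsto \alpha_iRt/(1-w_it)$, the paper by testing finiteness of $G_{\Gamma_i,R}(x,y|1+\epsilon)$ for $\epsilon>0$ and you by explicitly identifying the radius of convergence of $t\mapsto G_{\Gamma_i,R}(e,e|t)$ as $\varphi^{-1}(R_i)$. Your added care about Pringsheim and the identification of the spectral radius of $p_{\Gamma_i,R}$ with the inverse of that radius of convergence is welcome but does not change the argument.
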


\begin{proof}
By applying~(\ref{superWoess}) with $t=1+\epsilon$ and $r=R$, we get
$$G_{\Gamma_i,R}(x,y|1+\epsilon)=\frac{1}{1-w_i(1+\epsilon)}G_{i}\left (x,y\ \bigg|\ \frac{\alpha_iR(1+\epsilon)}{1-w_i(1+\epsilon)}\right ).$$
The condition $\epsilon>0$ yields $\frac{\alpha_iR(1+\epsilon)}{1-w_i(1+\epsilon)}>\frac{\alpha_iR}{1-w_i}=\zeta_i(R)$.
Thus, there exists $t>1$ such that $G_{\Gamma_i,R}(x,y|t)$ is finite if and only if there exists $z>\zeta_i(R)$ such that $G_{i}(x,y|z)$ is finite, which concludes the proof.
\end{proof}

In \cite{Woessbook}, the situation where $\zeta_i(R)<R_i$ for $i=1,2$ is called the "typical case".
Thus, Lemma~\ref{firstequivalence} shows that this typical case corresponds to being spectrally non-degenerate.

\begin{lemma}\label{expressionzetaGzeta}
For all $r\leq R$, we have $\zeta_i(r)G_i(\zeta_i(r))=\alpha_irG(r)$.
\end{lemma}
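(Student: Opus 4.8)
The plan is to combine three identities already established in this section: the coincidence~(\ref{Greencoincide}) between the Green function $G_{\Gamma_i,r}$ of the first return kernel and $G$, the formula~(\ref{superWoess}) specialised to $t=1$ and $x=y=e$, and the explicit expression $\zeta_i(r)=\alpha_i r/(1-w_i)$ for the function $\zeta_i$ that was derived just above. No new input is needed; the statement is essentially a repackaging of these facts.

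Concretely, I would first evaluate~(\ref{superWoess}) at $t=1$, $x=y=e$, which gives
$$G_{\Gamma_i,r}(e,e|1)=\frac{1}{1-w_i}\,G_i\!\left(\zeta_i(r)\right).$$
By~(\ref{Greencoincide}) the left-hand side equals $G(e,e|r)=G(r)$, hence $G_i(\zeta_i(r))=(1-w_i)G(r)$. Multiplying both sides by $\zeta_i(r)$ and using the defining relation $\zeta_i(r)(1-w_i)=\alpha_i r$ yields $\zeta_i(r)G_i(\zeta_i(r))=\alpha_i r\,G(r)$, which is the claim.

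The only point deserving a word of care is that all quantities are finite and the algebra is legitimate on the whole range $r\le R$; for $r<R$ this is immediate since $\zeta_i(r)<R_i$ and $G(r)<\infty$, while at $r=R$ one uses $G(R)<\infty$ (Guivarc'h's theorem, recalled in the introduction) and $\zeta_i(R)\le R_i$, passing to the limit $r\to R^-$ in the identity just obtained if one wishes to avoid discussing $w_i(R)$ directly. Beyond this bookkeeping there is no real obstacle: the lemma follows from the structure of $p_{\Gamma_i,r}$ already unpacked above.
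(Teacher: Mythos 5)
Your proof is correct, but it takes a genuinely shorter route than the paper's. The paper's proof works through the first-return generating functions $U$, $U_i$ and the auxiliary function $H_i$ of \cite[Proposition~9.18~(b)]{Woessbook}: it establishes $1+G(r)H_i(r)=G_i(\zeta_i(r))$ and then computes $\zeta_i(r)=\alpha_i r/(1-w_i)=\alpha_i rG(r)/(1+G(r)H_i(r))=\alpha_i rG(r)/G_i(\zeta_i(r))$. You bypass all of that by specializing~(\ref{superWoess}) at $t=1$, $x=y=e$ and invoking~(\ref{Greencoincide}) to get $G_i(\zeta_i(r))=(1-w_i)G(r)$ directly, after which multiplying by $\zeta_i(r)$ and using $\zeta_i(r)(1-w_i)=\alpha_i r$ closes the argument. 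Both arguments hinge on the identification $\zeta_i(r)=\alpha_i r/(1-w_i)$ displayed just before the lemma, so there is no circularity; what the paper's longer detour buys is an independent re-derivation of that identification from Woess's original definition of $\zeta_i$ via $H_i$, i.e.\ it verifies that the $\zeta_i$ of \cite[Proposition~9.18]{Woessbook} (the one appearing in~(\ref{formulaWoess})) really is $\alpha_i r/(1-w_i)$, rather than taking that for granted from the ratio identity alone. Your remark about finiteness at $r=R$ is harmless but not needed: $G(R)<\infty$ makes every quantity in the identity finite there, so the computation goes through at $r=R$ without a limiting argument.
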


\begin{proof}
Let us introduce the quantities $U$ and $U_i$, $i=1,2$, defined by
$$
U(r)= U(e,e|r) = \sum_{n\geq 0} \mathbb P((\text{first return time of the }\mu\text{-random walk to } e) = n) r^n
$$
and
$$
U_i(r)= U_i(e,e|r) = \sum_{n\geq 0} \mathbb P((\text{first return time of the }\mu_i\text{-random walk to } e) = n) r^n.
$$
By \cite[Lemma~1.13~(a)]{Woessbook},
$G(r)(1-U(r))=G_i(r)(1-U_i(r))=1$.
Following \cite[Proposition~9.18~(b)]{Woessbook}, the weight $w_i$ may be written as $w_i=U(r)-H_i(r)$, where $H_i$ satisfies the equation
$$\frac{G(r)H_i(r)}{1+G(r)H_i(r)}=U_i(\zeta_i(r)),$$
i.e.
$G(r)H_i(r)(1-U_i(\zeta_i(r)))=U_i(\zeta_i(r))$.
The equality $G_i(1-U_i)=1$ yields
$$\frac{G(r)H_i(r)}{G_i(\zeta_i(r))}=\frac{G_i(\zeta_i(r))-1}{G_i(\zeta_i(r))},$$
i.e.
\begin{equation}\label{equationHi}
1+G(r)H_i(r)=G_i(\zeta_i(r)).
\end{equation}
Consequently, since $G(1-U)=1$,
$$\zeta_i(r)=\frac{\alpha_ir}{1-w_i}=\frac{\alpha_ir}{1-U(r)+H_i(r)}=\frac{\alpha_irG(r)}{1+G(r)H_i(r)},$$
so by~(\ref{equationHi}),
\begin{equation*}
\zeta_i(r)=\frac{\alpha_irG(r)}{G_i(\zeta_i(r))}. \qedhere
\end{equation*}
\end{proof}

Let us now introduce the notations $\theta=RG(R)$, $\theta_i=R_iG(R_i)$, $i=1,2$ and $\overline{\theta}=\min \{\theta_i/\alpha_i\}$.
These parameters play a crucial role in the study of the Green function on free products in \cite[Chapter~9]{Woessbook}.
In particular, by \cite[Theorem~9.19]{Woessbook}, it holds that $\theta\leq \theta_i/\alpha_i$, $i=1,2$, so $\theta\leq \overline{\theta}$.

The following statement gives a characterization of spectral degeneracy in terms of $\theta$ and $\overline{\theta}$

\begin{lemma}\label{secondequivalence}
The random walk is spectrally degenerate along $\Gamma_i$ if and only if $\overline{\theta}=\theta=\theta_i/\alpha_i$.
\end{lemma}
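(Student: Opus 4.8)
The plan is to unwind the definitions using the two lemmas just established. By Lemma~\ref{firstequivalence}, spectral degeneracy along $\Gamma_i$ is equivalent to $\zeta_i(R)=R_i$. So I would first show that $\zeta_i(R)=R_i$ is equivalent to $\theta=\theta_i/\alpha_i$, and then, using the inequality $\theta\leq\overline{\theta}=\min_j\{\theta_j/\alpha_j\}\leq\theta_j/\alpha_j$ from \cite[Theorem~9.19]{Woessbook}, deduce that this forces $\overline\theta=\theta=\theta_i/\alpha_i$ as well.

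The key computational input is Lemma~\ref{expressionzetaGzeta}, which at $r=R$ reads $\zeta_i(R)G_i(\zeta_i(R))=\alpha_i R G(R)=\alpha_i\theta$. First, assume the random walk is spectrally degenerate along $\Gamma_i$, so $\zeta_i(R)=R_i$. Plugging into the displayed identity gives $R_iG_i(R_i)=\alpha_i\theta$, i.e.\ $\theta_i=\alpha_i\theta$, which is exactly $\theta=\theta_i/\alpha_i$. Combined with $\theta\leq\overline\theta\leq\theta_i/\alpha_i$, we get $\theta=\overline\theta=\theta_i/\alpha_i$. Conversely, assume $\overline\theta=\theta=\theta_i/\alpha_i$. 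Then $\theta_i=\alpha_i\theta$, so from Lemma~\ref{expressionzetaGzeta} at $r=R$ we have $\zeta_i(R)G_i(\zeta_i(R))=\alpha_i\theta=\theta_i=R_iG_i(R_i)$. Now I would invoke monotonicity: the function $z\mapsto zG_i(z)$ is nonnegative, continuous and strictly increasing on $[0,R_i]$ (it is a power series with nonnegative coefficients, not all zero, so strictly increasing), hence injective there. Since $\zeta_i(R)\leq R_i$ (stated just after~(\ref{formulaWoess})), the equality $\zeta_i(R)G_i(\zeta_i(R))=R_iG_i(R_i)$ forces $\zeta_i(R)=R_i$, which by Lemma~\ref{firstequivalence} is spectral degeneracy along $\Gamma_i$.

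The main point requiring a little care is the strict monotonicity of $z\mapsto zG_i(z)$ on the closed interval up to $R_i$, including at the endpoint: one needs $G_i(R_i)$ finite, which holds because $\Gamma_i$ is a free factor of the non-amenable group $\Gamma$—and indeed $G_i(R_i)<\infty$ follows either directly if $\Gamma_i$ is non-amenable, or from the general finiteness of $G$ at its radius together with~(\ref{formulaWoess}); in any case, for the argument we only use $\zeta_i(R)\le R_i$ and that $zG_i(z)$ is finite and strictly increasing up to that value, which is all automatic from the power-series expansion since $\zeta_i(R)\le R_i$ lies in the closed disc of convergence. I do not expect any serious obstacle; the whole statement is essentially a repackaging of Lemmas~\ref{firstequivalence} and~\ref{expressionzetaGzeta} together with the inequality $\theta\leq\theta_i/\alpha_i$.
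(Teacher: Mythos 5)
Your proposal is correct and follows essentially the same route as the paper: both directions reduce, via Lemma~\ref{firstequivalence} and the identity $\zeta_i(R)G_i(\zeta_i(R))=\alpha_iRG(R)$ of Lemma~\ref{expressionzetaGzeta} evaluated at $r=R$, to comparing $\zeta_i(R)$ with $R_i$, and the inequality $\theta\le\overline{\theta}\le\theta_i/\alpha_i$ supplies the remaining equality. The only cosmetic difference is that you invoke strict monotonicity of $z\mapsto zG_i(z)$ where the paper argues directly from $\zeta_i(R)\le R_i$ and the monotonicity of $G_i$ that $\zeta_i(R)=R_iG_i(R_i)/G_i(\zeta_i(R))\ge R_i$; these are the same observation.
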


\begin{proof}
Assume that $\theta=\overline{\theta}=\theta_i/\alpha_i$, i.e.\ $RG(R)=R_iG_i(R_i)/\alpha_i$.
By Lemma~\ref{expressionzetaGzeta}, we thus have
$$\zeta_i(R)=R_i\frac{G_i(R_i)}{G_i(\zeta_i(R))}.$$
Since $\zeta_i(R)\leq R_i$, we deduce that $G_i(R_i)\geq G_i(\zeta_i(R))$ and so $\zeta_i(R)\geq R_i$.
Finally, $\zeta_i(R)=R_i$ and so the random walk is spectrally degenerate along $\Gamma_i$ by Lemma~\ref{firstequivalence}.

Conversely, if the random walk is spectrally degenerate along $\Gamma_i$, then $\zeta_i(R)=R_i$ by Lemma~\ref{firstequivalence}.
This implies $G_i(\zeta_i(R))=G_i(R_i)$.
Consequently,
$$R_i=\zeta_i(R)=\frac{\alpha_iRG(R)}{G_i(\zeta_i(R))}=\frac{\alpha_iRG(R)}{G_i(R_i)}.$$
Therefore, $R_iG_i(R_i)/\alpha_i=RG(R)$, i.e.\ $\theta=\theta_i/\alpha_i$.
Combining this with the inequality $\theta\leq \overline{\theta}$,  we finally obtain $\theta=\overline{\theta}$.
\end{proof}

Following \cite[Chapter~9]{Woessbook}, let us introduce two functions $\Phi$ and $\Psi$ which are very useful in the context of free products.

On the one hand, he function $\Phi$ is defined implicitly by the formula
$$G(r)=\Phi(rG(r))$$
for every $r\leq R$.
This function is defined in general on an open neighborhood (inside the complex plane) of the interval $[0,\theta)$.
Since $G(R)$ is finite, it is also defined on $[0,\theta]$.

On the other hand, we set $\Psi(t)=\Phi(t)-t\Phi'(t)$.
By \cite[(9.14)]{Woessbook}, letting $t=rG(r)$, we have
\begin{align*}
    \Psi(t)&=\frac{1}{rU'(r)+1-U(r)}\\
    &=\frac{1}{1+\sum_{n\geq 0}(n-1)\mathbb P((\text{first return time of the }\mu\text{-random walk to } e) = n)r^n}.
\end{align*}
In particular $\Psi$ is strictly decreasing on the interval $[0,\theta]$ and satisfies $\Psi(t)<1$ for $t>0$ and $\Psi(0)=1$.
The equality $G(r)(1-U(r))=1$ readily implies that
\begin{equation}\label{equationPsiG}
\Psi(t)=\frac{G(r)^2}{rG(r)'+G(r)}.
\end{equation}
Thus, $\Psi(\theta)=0$ if and only if $G'(R)=\infty$, since $G(R)$ is finite.

In our context of free products $\Gamma=\Gamma_1*\Gamma_2$,
we have by \cite[Theorem~9.19]{Woessbook}
\begin{equation}\label{equationPhi}
\Phi(t)=\Phi_1(\alpha_1t)+\Phi_2(\alpha_2t)-1
\end{equation}
and
\begin{equation}\label{equationPsi}
\Psi(t)=\Psi_1(\alpha_1t)+\Psi_2(\alpha_2t)-1.
\end{equation}

Thus, both functions $\Phi$ and $\Psi$ can be extended on $[0,\overline{\theta}]$
and $\Psi$ is still continuous and strictly decreasing on $[0,\overline{\theta}]$.

\begin{lemma}\label{thirdequivalence}
The random walk is spectrally degenerate if and only if $\Psi(\overline{\theta})\geq 0$.
Moreover, $G'(R)$ is infinite if and only if $\Psi(\overline{\theta})\leq 0$.
\end{lemma}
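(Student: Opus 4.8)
The plan is to reduce both equivalences to a single one‑variable monotonicity statement about the function $r\mapsto rG(r)$, using Lemma~\ref{secondequivalence} and the characterization of $R$ obtained from the implicit‑function analysis of \cite[Chapter~9]{Woessbook}. First I would record the reformulation: combining Lemma~\ref{secondequivalence} with $\overline\theta=\min\{\theta_i/\alpha_i\}$, the random walk is spectrally degenerate (along $\Gamma_1$ or $\Gamma_2$) if and only if $\theta=\overline\theta$ — indeed, if $\theta=\overline\theta$ then $\overline\theta=\theta_{i_0}/\alpha_{i_0}$ for the index $i_0$ realizing the minimum, whence degeneracy along $\Gamma_{i_0}$, and the converse is immediate from Lemma~\ref{secondequivalence}. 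Moreover, by the discussion following~(\ref{equationPsiG}) we have $\Psi(\theta)\geq 0$ in all cases, with $\Psi(\theta)=0$ if and only if $G'(R)=\infty$ (as $G(R)<\infty$). Thus it suffices to prove that $\theta=\overline\theta\iff\Psi(\overline\theta)\geq 0$ and that $\Psi(\overline\theta)\leq 0\iff\Psi(\theta)=0$.

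Next I would introduce $r(t)=t/\Phi(t)$ on $[0,\overline\theta]$. Since $\Phi\geq 1$ there (each $\Phi_i\geq 1$ on its domain, so this follows from~(\ref{equationPhi})) and $\Phi$ is continuous and differentiable on the interior, $r$ is well defined, continuous, with $r(0)=0$ and $r'(t)=\bigl(\Phi(t)-t\Phi'(t)\bigr)/\Phi(t)^2=\Psi(t)/\Phi(t)^2$. Two facts drive the argument: (a) for $r\in[0,R]$ the map $r\mapsto rG(r)$ is a strictly increasing homeomorphism of $[0,R]$ onto $[0,\theta]$ whose inverse is $t\mapsto r(t)$ — because $rG(r)=t$ forces $G(r)=\Phi(rG(r))=\Phi(t)$, hence $r=t/\Phi(t)$ — so in particular $r$ is strictly increasing on $[0,\theta]$ and $r(\theta)=R$; and (b) the analysis of the Green function of a free product in \cite[Chapter~9]{Woessbook} identifies the spectral radius as $R=\max\{r(t):t\in[0,\overline\theta]\}$, i.e.\ $R$ is the largest value of $r$ for which the equation $g=\Phi(rg)$ admits a solution $g\in[1,G(R)]$.

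Then I would run the following dichotomy. Since $\Psi$ is continuous and strictly decreasing on $[0,\overline\theta]$ with $\Psi(0)=1>0$, the sign of $r'=\Psi/\Phi^2$ shows that $r$ has a unique maximizer $t_{\max}\in(0,\overline\theta]$: if $\Psi(\overline\theta)\geq 0$ then $r$ is strictly increasing on $[0,\overline\theta]$ and $t_{\max}=\overline\theta$, whereas if $\Psi(\overline\theta)<0$ then $r$ is strictly increasing up to the unique zero $t^\ast\in(0,\overline\theta)$ of $\Psi$ and strictly decreasing afterwards, so $t_{\max}=t^\ast<\overline\theta$. By (a) and (b), $R=r(\theta)$ equals $\max_{[0,\overline\theta]}r$, so $\theta$ is a maximizer, and by uniqueness $\theta=t_{\max}$. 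Hence $\theta=\overline\theta\iff t_{\max}=\overline\theta\iff\Psi(\overline\theta)\geq 0$, which is the first equivalence. For the second, if $\Psi(\overline\theta)\leq 0$ then either $\Psi(\overline\theta)<0$, in which case $\theta=t_{\max}=t^\ast$ and $\Psi(\theta)=0$, or $\Psi(\overline\theta)=0$, in which case $\theta=t_{\max}=\overline\theta$ and $\Psi(\theta)=\Psi(\overline\theta)=0$; conversely, if $\Psi(\overline\theta)>0$ then $\theta=\overline\theta$ and $\Psi(\theta)=\Psi(\overline\theta)>0$. Therefore $\Psi(\overline\theta)\leq 0\iff\Psi(\theta)=0\iff G'(R)=\infty$.

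The only non‑elementary ingredient is fact (b); everything else is a one‑variable argument. So the main point requiring care is to cite the correct statement from \cite[Chapter~9]{Woessbook} and to check that it applies on the full extended domain $[0,\overline\theta]$ of $\Phi$ (not merely on $[0,\theta]$, where $r$ is automatically the inverse of $r\mapsto rG(r)$): concretely, that for $t\in(\theta,\overline\theta]$ the value $r(t)=t/\Phi(t)$ cannot exceed $R$, which is exactly what forces $\theta$ to coincide with the maximizer of $r$. I would double‑check this against the precise formulation in \cite{Woessbook} (and, if helpful, against its use in \cite{DussauleLLT1}) when filling in the details.
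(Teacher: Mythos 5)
Your argument is correct, and its logical skeleton coincides with the paper's: reduce spectral degeneracy to the dichotomy $\theta=\overline{\theta}$ versus $\theta<\overline{\theta}$ via Lemma~\ref{secondequivalence}, and reduce the divergence statement to the identity $\Psi(\theta)=0\iff G'(R)=\infty$ coming from~(\ref{equationPsiG}). The only difference is in how the dichotomy is obtained: the paper quotes \cite[Theorem~9.22]{Woessbook} as a black box (``$\Psi(\overline{\theta})<0$ implies $\theta<\overline{\theta}$ and $\theta$ is the unique zero of $\Psi$; $\Psi(\overline{\theta})\geq 0$ implies $\theta=\overline{\theta}$''), whereas you rederive it from the variational characterization $R=\max\{t/\Phi(t):t\in[0,\overline{\theta}]\}$ together with the strict monotonicity of $\Psi$. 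That variational fact is precisely the content of the proof of Theorem~9.22 in \cite{Woessbook} (the radius of convergence of $G$ is reached either where the implicit function theorem fails for $g=\Phi(rg)$, i.e.\ $\Psi(rG(r))=0$, or where $rG(r)$ hits the boundary $\overline{\theta}$ of the domain of $\Phi$), so your proof is a self-contained unpacking of the same citation rather than a genuinely different route; you correctly identify it as the one non-elementary input to be sourced carefully. All the elementary steps you fill in (the inverse relation $r(t)=t/\Phi(t)$ on $[0,\theta]$, $r'=\Psi/\Phi^2$, $\Phi\geq 1$ on $[0,\overline{\theta}]$, uniqueness of the maximizer) check out.
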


\begin{proof}
This statement is a consequence of \cite[Theorem~9.22]{Woessbook}.

Assume first that $\Psi(\overline{\theta})<0$, hence by \cite[Theorem~9.22]{Woessbook}, it holds $\theta<\overline{\theta}$ and by Lemma~\ref{secondequivalence}, this implies that the random walk is not spectrally degenerate.
Moreover, in this case $\theta$ is the unique solution of $\Psi(t)=0$ in $(0,\overline{\theta})$.
In particular $\Psi(\theta)=0$, hence $G'(R)$ is infinite.

Assume now $\Psi(\overline{\theta})\geq 0$.
Then, \cite[Theorem~9.22]{Woessbook} implies $\theta=\overline{\theta}$ and so the random walk is spectrally degenerate.
On the one hand, if $\Psi(\overline{\theta})>0$, then $\Psi(\theta)>0$ and so $G'(R)$ is finite.
On the other hand, if $\Psi(\overline{\theta})=0$, then $\Psi(\theta)=0$ and so $G'(R)$ is infinite.
\end{proof}

Let us conclude this section by summarizing the situation as follows.

\begin{corollary}\label{trichotomy}
We have the following trichotomy.
\begin{itemize}
    \item If $\Psi(\overline{\theta})<0$, the random walk is spectrally non-degenerate and divergent.
    \item If $\Psi(\overline{\theta})=0$, the random walk is spectrally degenerate and divergent.
    \item If $\Psi(\overline{\theta})>0$, the random walk is spectrally degenerate and convergent.
\end{itemize}
\end{corollary}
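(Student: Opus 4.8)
The plan is to read off the trichotomy directly from Lemma~\ref{thirdequivalence}, which already packages the two equivalences we need. Recall that the random walk is \emph{divergent} exactly when $G'(R)$ is infinite, and \emph{convergent} otherwise; and that being \emph{spectrally non-degenerate} means, by definition, that the walk is not spectrally degenerate along either $\Gamma_1$ or $\Gamma_2$, which is precisely the negation of the condition called ``spectrally degenerate'' in Lemma~\ref{thirdequivalence}. So the whole proof amounts to distributing the three possible signs of $\Psi(\overline{\theta})$ over the two sign characterizations supplied by that lemma.

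First I would treat the case $\Psi(\overline{\theta})<0$: the first assertion of Lemma~\ref{thirdequivalence} tells us the walk is not spectrally degenerate, hence spectrally non-degenerate, and since $\Psi(\overline{\theta})<0$ implies $\Psi(\overline{\theta})\leq 0$, the second assertion gives $G'(R)=\infty$, i.e.\ divergence. Next, for $\Psi(\overline{\theta})=0$ both $\Psi(\overline{\theta})\geq 0$ and $\Psi(\overline{\theta})\leq 0$ hold, so Lemma~\ref{thirdequivalence} yields spectral degeneracy and divergence simultaneously. Finally, for $\Psi(\overline{\theta})>0$ the inequality $\Psi(\overline{\theta})\geq 0$ gives spectral degeneracy, while $\Psi(\overline{\theta})\leq 0$ fails, so $G'(R)$ is finite and the walk is convergent.

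Since the three regimes $\Psi(\overline{\theta})<0$, $\Psi(\overline{\theta})=0$, $\Psi(\overline{\theta})>0$ are exhaustive and mutually exclusive, this completes the argument. There is essentially no obstacle here: the only point requiring a moment's care is to pair the strict and non-strict inequalities of Lemma~\ref{thirdequivalence} with the correct regime and to unwind the definitions of \emph{divergent} and \emph{spectrally non-degenerate}. All the substantive work was already carried out in Lemmas~\ref{firstequivalence}--\ref{thirdequivalence} and in the cited \cite[Theorem~9.22]{Woessbook}.
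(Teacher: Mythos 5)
Your proposal is correct and matches the paper's intent exactly: the paper states the corollary as a direct summary of Lemma~\ref{thirdequivalence} (offering no separate proof), and your case-by-case distribution of the three signs of $\Psi(\overline{\theta})$ over the two equivalences of that lemma is precisely the intended argument.
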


\section{A divergent not spectrally positive recurrent random walk}\label{sectionconstruction}
In this section, we construct an adapted random walk on $\Gamma=\Z^3*\Z^d$, $d=5$ or 6,  which is divergent but not spectrally positive recurrent.
Such a random walk is necessarily spectrally degenerate and corresponds to the second case in Corollary~\ref{trichotomy}.

\subsection{Several erroneous lemmas}
We first restate \cite[Lemma~4.5]{CandelleroGilch} (switching the indices 1 and 2) and then explains how it leads to a contradiction.
This contradiction is what alerted us in the first place.
The flaw in the argument is quite subtle and we will come back to it in Section~\ref{sectionmistakes}.

\begin{wlemma}\label{lemma4.5}\cite[Lemma~4.5]{CandelleroGilch}
Assume that $\theta=\overline{\theta}=\theta_2/\alpha_2$ and that $G_1'(\zeta_1(R))$ and $G_2'(R_2)$ are finite, then if $\Psi(\overline{\theta})=0$, $\Phi''(\overline{\theta})$ is finite.
\end{wlemma}

By \cite[Proposition~1.9]{GouezelLalley}, the sum $I^{(1)}$ defined by~(\ref{defnI}) satisfies
\begin{equation}\label{formulafirstderivative}
    I^{(1)}(r)=rG'(r)+G(r).
\end{equation}
Similarly, letting for $i=1,2$
$$I_{G_i}^{(1)}(\zeta_i(r))=\sum_{x\in \Gamma_i}G_i(e,x|\zeta_i(r))G(x,e|\zeta_i(r)),$$
we have
\begin{equation}\label{iformulafirstderivative}
    I_{G_i}^{(1)}(\zeta_i(r))=\zeta_i(r)G_i'(\zeta_i(r))+G_i(\zeta_i(r)), i=1,2.
\end{equation}
By induction, similar formulae hold for higher derivatives, see \cite[Lemma~3.2]{DussauleLLT1}.
In the sequel, we will use in particular the one concerning the second derivatives :
\begin{equation}\label{formulasecondderivative}
    2rI^{(2)}(r)=2rG(r)+4r^2G'(r)+r^3G''(r).
\end{equation}

\medskip
First, by \cite[Proposition~6.3]{DG21}, the quantity
$$I^{(1)}_{i}=\sum_{x\in \Gamma_i}G(e,x|R)G(x,e|R)$$
is finite.
Using~(\ref{formulaWoess}), we deduce that $I^{(1)}_{G_i}(\zeta_i(r))$ is finite.
Consequently, for $i=1,2$, we have by~(\ref{iformulafirstderivative})
\begin{equation}\label{G'ifinite}
G_i'(\zeta_i(R))<\infty.
\end{equation}
Now, by Lemmas~\ref{firstequivalence} and~\ref{secondequivalence}, the equality $\theta=\theta_2/\alpha_2$ implies that the random walk is spectrally degenerate along $\Gamma_2$ and so $\zeta_2(R)=R_2$.
Finally,  $G_1'(\zeta_1(R))$ and $G_2'(R_2)$ are both finite.

\medskip
Second,
we need an explicit form of $\Phi''$.
The equality
$$G(r)=\Phi(rG(r))$$ readily implies
$$G'(r)=(rG(r))'\Phi'(rG(r))=(rG'(r)+G(r))\Phi'(rG(r))$$
and
\begin{align*}
G''(r)&=(rG(r))''\Phi'(rG(r))+((rG(r))')^2\Phi''(rG(r))\\
&=\frac{rG'(r)G''(r)+2G'(r)^2}{rG'(r)+G(r)}+(rG'(r)+G(r))^2\Phi''(rG(r)).
\end{align*}
Consequently,
\begin{equation}\label{equationphi''}
\Phi''(rG(r))=\frac{G(r)G''(r)-2G'(r)^2}{(rG'(r)+G(r))^3}.
\end{equation}
In particular, for $r=R$, we obtain
$$\Phi''(\theta)=\frac{G(R)G''(R)-2G'(R)^2}{(RG'(R)+G(R))^3}.$$
Therefore, if $G'(R)$ is finite, then $\Phi''(\theta)$ is finite if and only if $G''(R)$ is finite.

Now, differentiating twice Equation~(\ref{equationPhi}), we get
\begin{equation}\label{equationPhi''phii}
\Phi''(t)=\alpha_1^2\Phi''(\alpha_1t)+\alpha_2^2\Phi''(\alpha_2t),
\end{equation}
which implies that $\Phi''(\theta)$ is finite if and only if $\Phi_1''(\alpha_1\theta)$ and $\Phi_2''(\alpha_2\theta)$ are both finite.

Finally, by Corollary~\ref{trichotomy}, the condition $\Psi(\overline{\theta})=0$ is equivalent to the fact that the random walk driven by $\mu$ is spectraly degenerate and divergent.
Thus, Erroneous Lemma~\ref{lemma4.5} can be written as follows.

\begin{wlemma}[Alternative version of Erroneous Lemma~\ref{lemma4.5}]
Assume the random walk driven by $\mu$ is spectrally degenerate along $\Gamma_2$.
If $G'(R)$ is infinite, then $G_2''(R_2)$ is finite.
\end{wlemma}

A more general statement also appeared in a first version of \cite{DPT}, which led the authors to modify their statement.
Define for $i=1,2$,
$$I_{G_i}^{(2)}(\zeta_i(r))=\sum_{x,y\in \Gamma_i}G_i(e,x|\zeta_i(r))G(x,y|\zeta_i(r))G(y,e|\zeta_i(r)).$$
By~(\ref{formulaWoess}),
the quantity $J^{(2)}$ defined in~(\ref{defnJ}) can be written as
\begin{equation}\label{CorrespondenceJ2I2}
G(r)^3J^{(2)}(r)=G_1(\zeta_1(r))^3I_{G_1}^{(2)}(\zeta_1(r))+G_2(\zeta_2(r))^3I_{G_2}^{(2)}(\zeta_2(r)).
\end{equation}
In particular, $J^{(2)}(R)$ is finite if and only if
$I_{G_1}^{(2)}(\zeta_1(R))$ and $I_{G_2}^{(2)}(\zeta_2(R))$ are both finite.
Applying~(\ref{formulasecondderivative}) to $G_i$,
we have that
\begin{equation}\label{CorrespondenceJ2I2'}
    2\zeta_i(r)I_{G_i}^{(2)}(\zeta_i(r))=\zeta_i(r)G_i(\zeta_i(r))+4\zeta_i(r)^2G'_i(\zeta_i(r))+\zeta_i(r)^3G''_i(\zeta_i(r)).
\end{equation}
Since $G_i(\zeta_i(R))$ and $G'_i(\zeta_i(R))$ must be finite by~(\ref{G'ifinite}), we deduce that
\begin{equation}\label{CorrespondenceJ2G''}
    J^{(2)}(R)<\infty \text{ iff } G''_i(\zeta_i(R))<\infty, i=1,2.
\end{equation}
Thus, this lemma is a special case of the following wrong statement that appeared in a first version of \cite{DPT}.

\begin{wlemma}[Generalized version of Erroneous Lemma~\ref{lemma4.5}]\label{wronglemmaDPT}
In the context of relatively hyperbolic groups with respect to virtually abelian subgroups,
if $G'(R)$ is infinite, then $J^{(2)}(R)$ is finite, i.e.\ the random walk is spectrally positive recurrent.
\end{wlemma}

\subsection{Constructing counterexamples}

Let us disprove Erroneous Lemma~\ref{lemma4.5}.
From now on, we set $\alpha_1=\alpha$ and $\alpha_2=1-\alpha$.
First, note that there exists $\alpha_c$ such that
\begin{itemize}
    \item if $\alpha<\alpha_c$, then
$\overline{\theta}=\theta_2/\alpha_2<\theta_1/\alpha_1$,
\item if $\alpha=\alpha_c$, then $\overline{\theta}=\theta_1/\alpha_1=\theta_2/\alpha_2$,
\item if $\alpha>\alpha_c$, then $\overline{\theta}=\theta_1/\alpha_1<\theta_2/\alpha_2$.
\end{itemize}

Therefore,
\begin{itemize}
    \item if $\alpha<\alpha_c$, then  $\frac{\alpha}{1-\alpha}\theta_2<\theta_1$ and
$\Psi(\overline{\theta})=\Psi_1\left(\frac{\alpha}{1-\alpha}\theta_2\right )+\Psi_2(\theta_2)-1$.
\item if $\alpha=\alpha_c$, then $\Psi(\overline{\theta})=\Psi_1(\theta_1)+\Psi_2(\theta_2)-1$.
\item if $\alpha>\alpha_c$, then $\frac{1-\alpha}{\alpha}\theta_1<\theta_2$ and
$\Psi(\overline{\theta})=\Psi_1(\theta_1)+\Psi_2\left( \frac{1-\alpha}{\alpha}\theta_1\right )-1$.
\end{itemize}
As a consequence, the function $\alpha\mapsto \Psi(\overline{\theta})$ is continuous, see also \cite[Lemma~7.1]{CandelleroGilch}.

\medskip
We now set $\Gamma=\Z^{d_1}*\Z^{d_2}$
and we consider symmetric admissible and finitely supported probability measures $\mu_i$ on $\Gamma_i=\Z^{d_i}$, $i=1,2$.
It is well known that we have
$\mu_i^{*2n}(e)\sim C_iR_1^{-2n}n^{-d_i/2}$, see for instance \cite[Theorem~13.12]{Woessbook}.
Now we choose $d_1$ and $d_2$ in such a way that
\begin{itemize}
    \item $G_i(R_i)$ is finite, $i=1,2$,
    \item $G_1'(R_1)$ is infinite but $G_2'(R_2)$ is finite,
    \item $G''(R_2)$ is infinite.
\end{itemize}
These three conditions, together with the fact that $R_1=R_2=1$ impose that $d_1=3$ or 4 and $d_2=5$ or 6.
From now on, we set $d_1=3$ and we write $d=d_2\in \{5,6\}$.
In terms of the functions $\Phi_i$ and $\Psi_i$, it holds $\Psi_1(\theta_1)=0$, $\Psi_2(\theta_2)>0$ and $\Phi_2''(\theta_2)$ is infinite.

It follows that $\Psi(\overline{\theta})=0$ when $\alpha=1$, that $\Psi(\overline{\theta})=\Psi_2(\theta_2)>0$ when $\alpha=0$
and that  $\Psi(\overline{\theta})=\Psi_2(\frac{1-\alpha}{\alpha}\theta_1)-1<0$ when $\alpha\in [\alpha_c,1)$.
Thus by continuity, there exists $\alpha_*\in (0,\alpha_c)$ such that $\Psi(\overline{\theta})=0$ when $\alpha=\alpha_*$.
This yields for this value $\alpha_*$ of the parameter $\alpha$
$$\theta=\overline{\theta}=\theta_2/\alpha_2<\theta_1/\alpha_1, \text{ with }\alpha_1=\alpha_* \text{ and }\alpha_2=1-\alpha_*.$$

In other words, the random walk driven by $\mu_{\alpha_*}$ is spectrally degenerate along $\Gamma_2=\Z^d$ but not along $\Gamma_1=\Z^3$.
As a consequence, $\zeta_1(R)<R_1$ and so $G'(\zeta_1(R))$ is finite.
The assumptions of Erroneous Lemma~\ref{lemma4.5} are hence satisfied, so it would imply that $\Phi''(\overline{\theta})$ is finite, so $\Phi_2''(\theta_2)$ is finite by~(\ref{equationPhi''phii}).
This is a contradiction, so we disproved Erroneous Lemma~\ref{lemma4.5}.

Notice that the probability measure $\mu_{\alpha_*}$ satisfies the following properties.
\begin{enumerate}
    \item the random walk is spectrally degenerate along $\Gamma_2=\Z^d$,
    \item the random walk is not spectrally degenerate along $\Gamma_1=\Z^3$,
    \item $\Psi(\overline{\theta})=0$, hence the random walk driven by $\mu_{\alpha_*}$ is divergent,
    \item $\Phi''_2(\theta_2)$ is infinite, i.e.\ $G_2''(\zeta_2(R))$ is infinite. Thus $J^{(2)}$ is infinite by~(\ref{CorrespondenceJ2G''}) and the random walk driven by $\mu_{\alpha_*}$ is not spectrally positive recurrent.
\end{enumerate}

If we assume that $\mu_1$ or $\mu_2$ is aperiodic, i.e.\ $\mu_1^{*n}(e)$ or $\mu_2^{*n}(e)$ is positive for large enough $n$,
then $\mu_\alpha$ is also aperiodic for every $\alpha$.
This can be obtained for instance assuming that $\mu_1(e)$ and $\mu_2(e)$ are positive, i.e.\ by considering lazy random walks on the free factors.
This ends the proof of Theorem~\ref{maintheoremconstruction}. \qed

\medskip
We thus exhibited a phase transition at $\alpha=\alpha_*$, where the sign of $\Psi(\overline{\theta})$ changes, so does the behavior of the random walk by Corollary~\ref{trichotomy}.
Moreover, the following holds.
\begin{itemize}
    \item When $\Psi(\overline{\theta})<0$, the random walk is spectrally non-degenerate and by \cite[Theorem~1.1]{DussauleLLT2},
    $$\mu^{*n}(e)\sim CR^{-n}n^{-3/2}.$$
    \item when $\Psi(\overline{\theta})>0$, the random walk is convergent, hence spectrally degenerate.
    By \cite[Proposition~6.1]{DG21}, it cannot be spectrally degenerate along $\Z^3$.
    In this case, it holds by \cite[Theorem~1.3]{DPT} $$\mu^{*n}(e)\sim CR^{-n}n^{-d/2}.$$
\end{itemize}
As claimed in the introduction, at the phase transition $\alpha=\alpha_*$, the local limit theorem has an again different form.
This is the purpose of Section~\ref{SectionLLT}.

\subsection{Identifying the mistakes in Erroneous Lemmas}\label{sectionmistakes}
The mistake in the former version of \cite{DPT} when proving Erroneous Lemma~\ref{wronglemmaDPT} was to assume that the spectral radius $\rho_{H,r}$ of the first return transition kernel $p_{H,r}$ defined in~(\ref{defnfirstreturn}) were differentiable at $r=R$.
However, this differentiability property is only proved for convergent random walks.

The issue in \cite{CandelleroGilch} is more subtle.
The authors write $\zeta_i(r)=\zeta_i(R)+X_i(r)$ and first find a linear system of the form
$$C_1^{(i)}X_1(r)+C_2^{(i)}X_2(r)+o\big (R-r\big)=\mathrm{LP}_i(r),$$
$i=1,2$, where $\mathrm{LP}_i$ is a linear polynomial function.
Then, they derive a contradiction from this linear system, using the assumptions of Erroneous Lemma~\ref{lemma4.5}.
On Page~19 of \cite{CandelleroGilch}, they expand $(\zeta_i(R)+X_i(r))^n$ and then switch two sums to identify the coefficients $C_j^{(i)}$, see precisely \cite[(4.8)]{CandelleroGilch}.
However, switching sums is not legitimate, because the coefficients in front of $X_j(r)^{k_j}X_i(r)^{k_i}$ involve successive derivatives of the Green function $G_j$ at $\zeta_j(R)$ and these successive derivatives can be infinite.
This is typically the case when assuming that $\Phi''(\overline{\theta})$ is infinite and $\theta=\theta_2/\alpha_2=\overline{\theta}$, in which case the second derivative of $G_2$ at $\zeta_2(R)$ is infinite.

In any case, in both \cite{CandelleroGilch} and \cite{DPT}, the spotted invalid arguments are only related to the proofs of Erroneous Lemma~\ref{lemma4.5} and Erroneous Lemma~\ref{wronglemmaDPT} and do not affect the remainder of the papers.

\section{Local limit theorems}\label{SectionLLT}
We consider from now on the adapted probability measure $\mu_{\alpha_*}$ on $\Z^3*\Z^d$, with $d=5$ or $6$.
The random walk driven by $\mu_{\alpha_*}$ is spectrally degenerate along $\Z^d$, divergent, but not spectrally positive recurrent.
Now that $\alpha$ is fixed, we write $\mu=\mu_{\alpha_*}$ for simplicity.

For simplicity, we assume that $\mu_1$ and $\mu_2$ are aperiodic, i.e.\ $\mu_1^{*n}(e)$ and $\mu_2^{*n}(e)$ are positive for large enough $n$, so that $\mu$ is also aperiodic.
Our goal is to prove Theorem~\ref{maintheoremLLT}.


\subsection{Asymptotic differential equations}
By~(\ref{formulafirstderivative}) and~(\ref{formulasecondderivative}),  the two quantities $I^{(1)}(r)$ and $I^{(2)}(r)$ are related to the first and second derivatives of the Green function $G$.
Similarly, by~(\ref{CorrespondenceJ2I2}) and~(\ref{CorrespondenceJ2I2'}), the quantity $J^{(2)}$ is related to the second derivatives of the Green functions $G_i$, $i=1,2$.
One of the main results in \cite{DussauleLLT1} in the context of relatively hyperbolic group is the following rough formula that links the quantities $I^{(2)}$, $I^{(1)}$ and $J^{(2)}$ :
$$I^{(2)}(r)\asymp \left(I^{(1)}(r)\right)^3J^{(2)}(r)$$
which means that the ratio of these two quantities is bounded from above and below.

In the context of adapted measures on free products, the above rough estimates $\asymp$ can be improved to the more accurate asymptotics $\sim$ as follows.
\begin{proposition}\label{firstdifferential}
Consider an adapted probability measure $\mu_\alpha$ on $\Gamma=\Gamma_1*\Gamma_2$, with $0<\alpha<1$ and assume that $G'(R)=\infty$.
Then, there exist constants $C$, $c_1,c_2$ and $C'$ such that the following holds.
As $r\to R$, we have
$$G''(r)\sim C\left(G'(r)\right)^3\bigg(c_1G_1''\big(\zeta_1(r)\big)+c_2G_2''\big(\zeta_2(r)\big)-C'\bigg).$$
In particular, if $G''_1(\zeta_1(R))$ is finite and $G''_2(\zeta_2(R))$ is infinite, there exists $C$ such that
$$G''(r)\sim C\left(G'(r)\right)^3G_2''(\zeta_2(r)).$$
\end{proposition}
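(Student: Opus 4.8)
The plan is to start from the exact algebraic identities relating the various Green-function quantities and then feed in the known analytic information about each piece. The key exact relations are~(\ref{formulasecondderivative}), which gives $2rI^{(2)}(r)=2rG(r)+4r^2G'(r)+r^3G''(r)$, the correspondence~(\ref{CorrespondenceJ2I2}) between $J^{(2)}$ and the $I^{(2)}_{G_i}$, and~(\ref{CorrespondenceJ2I2'}) expressing each $\zeta_i(r)I^{(2)}_{G_i}(\zeta_i(r))$ in terms of $G_i,G_i',G_i''$ evaluated at $\zeta_i(r)$. The first step would be to invoke the rough estimate from \cite{DussauleLLT1}, $I^{(2)}(r)\asymp (I^{(1)}(r))^3 J^{(2)}(r)$, but this only gives $\asymp$, not $\sim$; so the real content of Proposition~\ref{firstdifferential} must come from something finer. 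I expect the finer input is an asymptotic version of the differential relation tying $I^{(2)}$ to $(I^{(1)})^3$ times $J^{(2)}$ obtained by a careful analysis of the combinatorial structure of the free product (word decomposition into $\Gamma_1$- and $\Gamma_2$-syllables), tracking the leading-order term rather than just two-sided bounds. Concretely, one writes a word contributing to $I^{(2)}$ as excursions in the factors; since $G'(R)=\infty$, the dominant contribution comes from long excursions, and the leading constant is governed by $G'(r)$ to the third power (matching $I^{(1)}\sim rG'$) multiplied by the ``return'' weights that assemble into $J^{(2)}$.

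The second step is to translate the $G_i$-quantities. By~(\ref{G'ifinite}), $G_i(\zeta_i(R))$ and $G_i'(\zeta_i(R))$ are finite, so in~(\ref{CorrespondenceJ2I2'}) the terms $\zeta_i(r)G_i(\zeta_i(r))$ and $4\zeta_i(r)^2 G_i'(\zeta_i(r))$ are bounded and converge as $r\to R$; only the $\zeta_i(r)^3 G_i''(\zeta_i(r))$ term can blow up. Using~(\ref{CorrespondenceJ2I2}) and the fact that $G(r)\to G(R)$, $\zeta_i(r)\to\zeta_i(R)$ are all finite nonzero limits, one obtains
\begin{equation*}
J^{(2)}(r)=c_1 G_1''(\zeta_1(r))+c_2 G_2''(\zeta_2(r)) - C' + o\big(G_1''(\zeta_1(r))+G_2''(\zeta_2(r))+1\big)
\end{equation*}
for explicit positive constants $c_1,c_2$ and a constant $C'$ (the $-C'$ collecting the finite limiting contributions of the bounded terms, together with the $-1$'s coming from~(\ref{equationPhi}),~(\ref{equationPsi})-type combinations). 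Similarly, from~(\ref{formulafirstderivative}) and $G'(R)=\infty$ one has $I^{(1)}(r)\sim R G'(r)$, and from~(\ref{formulasecondderivative}) one has $I^{(2)}(r)\sim \tfrac{R^2}{2}G''(r)$ provided $G''(r)\to\infty$ (which must be checked, but follows since $J^{(2)}(R)$ is infinite in the degenerate case of interest — indeed the whole point of the construction in Section~\ref{sectionconstruction} is that $G_2''(\zeta_2(R))=\infty$). Substituting these asymptotic equivalents into the refined relation $I^{(2)}\sim (\text{const})(I^{(1)})^3 J^{(2)}$ yields the stated formula $G''(r)\sim C(G'(r))^3\big(c_1G_1''(\zeta_1(r))+c_2G_2''(\zeta_2(r))-C'\big)$, after absorbing constants.

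The ``in particular'' clause is then immediate: if $G_1''(\zeta_1(R))<\infty$ and $G_2''(\zeta_2(r))\to\infty$, the bracket is dominated by $c_2 G_2''(\zeta_2(r))$, so the $c_1 G_1''(\zeta_1(r))$ and $-C'$ terms are lower-order and can be absorbed into the new constant $C$. The main obstacle, and the step deserving the most care, is establishing the \emph{asymptotic} (as opposed to merely two-sided) version of $I^{(2)}\sim\text{const}\cdot(I^{(1)})^3 J^{(2)}$: the two-sided bound in \cite{DussauleLLT1} is proved by somewhat lossy combinatorial arguments, and upgrading it to a genuine equivalence requires identifying the dominant family of paths and showing the subdominant contributions are $o$ of the main term — here one uses crucially that $G'(R)=\infty$ so that the ``one long excursion'' configurations dominate and the leading constant is clean. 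A secondary point to verify is that $G''(r)\to\infty$ so that $I^{(2)}(r)\sim\tfrac{R^2}{2}G''(r)$ is legitimate; this follows from the divergence hypothesis combined with the fact that in the regime under study $J^{(2)}(R)=\infty$, but it should be stated explicitly.
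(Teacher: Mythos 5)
There is a genuine gap. Your whole argument rests on upgrading the rough estimate $I^{(2)}(r)\asymp \big(I^{(1)}(r)\big)^3J^{(2)}(r)$ from \cite{DussauleLLT1} to a true asymptotic equivalence with an identified leading constant, and you explicitly leave that step unproven: you write that the finer input ``must come from'' a careful syllable-by-syllable analysis of paths in the free product and that this is ``the main obstacle,'' but you never carry it out. The surrounding translations (that $I^{(1)}(r)\sim RG'(r)$, that $I^{(2)}(r)\sim \tfrac{R^2}{2}G''(r)$, and that $J^{(2)}(r)$ equals $c_1G_1''(\zeta_1(r))+c_2G_2''(\zeta_2(r))$ plus a convergent remainder, via~(\ref{CorrespondenceJ2I2}) and~(\ref{CorrespondenceJ2I2'})) are all fine, but they are the easy part; without the central equivalence the proposal is a plan, not a proof, and extracting a clean constant from a dominant-path analysis would be at least as delicate as the proposition itself.

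The paper avoids this entirely by working with the function $\Phi$ instead of with $I^{(2)}$ and $J^{(2)}$. On one hand,~(\ref{equationphi''}) gives $\Phi''(rG(r))=\big(G(r)G''(r)-2G'(r)^2\big)/\big(rG'(r)+G(r)\big)^3\sim \frac{G(R)}{R^3}\,G''(r)/G'(r)^3$ because $G'(R)=\infty$. On the other hand, differentiating the \emph{exact} free-product identity~(\ref{equationPhi}) twice gives $\Phi''(t)=\alpha_1^2\Phi_1''(\alpha_1t)+\alpha_2^2\Phi_2''(\alpha_2t)$; evaluating at $t=rG(r)$, using $\alpha_irG(r)=\zeta_i(r)G_i(\zeta_i(r))$ from Lemma~\ref{expressionzetaGzeta}, and applying~(\ref{equationphi''}) to each $G_i$ (whose values and first derivatives at $\zeta_i(R)$ are finite by~(\ref{G'ifinite})) produces exactly the bracket $c_1G_1''(\zeta_1(r))+c_2G_2''(\zeta_2(r))-C'$. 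In other words, the ``asymptotic version of the $\asymp$ relation'' you were hoping to establish combinatorially is already encoded, exactly and additively, in the second derivative of the identity $\Phi=\Phi_1(\alpha_1\cdot)+\Phi_2(\alpha_2\cdot)-1$; if you pursued your route you would in effect be re-deriving this identity by hand. I recommend replacing the combinatorial step by this computation.
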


\begin{proof}
On the one hand, by~(\ref{equationphi''}) it holds
$$\Phi''(rG(r))=\frac{G(r)G''(r)-2G'(r)^2}{(rG'(r)+G(r))^3}.$$
The term $\frac{2G'(r)^2}{(rG'(r)+G(r))^3}$ converges to 0 as $r$ tends to $R$ and $G(r)$ converges to $G(R)$ which is finite.
Thus,
$$\Phi''(rG(r))\sim \frac{G(R)G''(r)}{R^3G'(r)^3}, r\to R.$$
On the other hand,
by~(\ref{equationPhi}) and Lemma~\ref{expressionzetaGzeta}
\begin{align*}
\Phi''(rG(r))&=\alpha_1^2\Phi_1''(\alpha_1rG(r))+\alpha_2^2\Phi_2''(\alpha_2rG(r))\\
&=\alpha_1^{2}\Phi_1''(\zeta_1(r)G_1(\zeta_1(r)))+\alpha_2^{2}\Phi_2''(\zeta_2(r)G_2(\zeta_2(r))).
\end{align*}
Therefore,~(\ref{equationphi''}) applied this time to the Green functions $G_i$ yields
$$\Phi''(rG(r))\sim c_1G_1''(\zeta_1(r))+c_2G_2''(\zeta_2(r))-C',$$
with
$$c_i=\alpha_i^2\frac{G_i(\zeta_i(R))}{(\zeta_i(R)G'_i(\zeta_i(R))+G_i(\zeta_i(R)))^3},i=1,2$$
and
$$C'=\frac{2\alpha_1^2G'_1(\zeta_1(R))^2}{\bigg(\zeta_1(R)G'_1(\zeta_1(R))+G_1(\zeta_1(R))\bigg)^3}+\frac{2\alpha_2^2G'_2(\zeta_2(R))^2}{\bigg(\zeta_2(R)G'_2(\zeta_2(R))+G_2(\zeta_2(R))\bigg)^3}.$$
This concludes the proof.
\end{proof}

We also prove the following result.
Recall that the quantities $I^{(k)}_i$ are defined in~(\ref{defnIki}).

\begin{lemma}\label{seconddifferential}
Consider an adapted probability measure $\mu_\alpha$ on $\Gamma=\Gamma_1*\Gamma_2$, with $0<\alpha<1$ and assume that $G'(R)=\infty$.
Then, there exists $C$ such that for $i=1,2$
$$\frac{d}{dr}\left (r^2I_{i}^{(1)}(r)\right )\sim C G'(r)I_{i}^{(2)}(r).$$
\end{lemma}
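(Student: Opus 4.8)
The plan is to relate both sides of the claimed asymptotic to derivatives of the Green function $G$ via the identities already established, and then to invoke Proposition~\ref{firstdifferential}. First I would recall from~(\ref{iformulafirstderivative}) that $r^2 I_i^{(1)}(r) = r^2\bigl(\zeta_i(r)G_i'(\zeta_i(r)) + G_i(\zeta_i(r))\bigr)$; however, it is cleaner to work on the $\Gamma$-side. By Lemma~\ref{expressionzetaGzeta} we have $\zeta_i(r)G_i(\zeta_i(r)) = \alpha_i r G(r)$, and by~(\ref{formulaWoess}) the quantity $I_i^{(1)}$ can be re-expressed in terms of $G(e,x|r)$ for $x\in\Gamma_i$: indeed $I^{(1)}_i = \sum_{x\in\Gamma_i} G(e,x|r)G(x,e|r)$, and using~(\ref{formulaWoess}) one gets $G(r)^2 I_{G_i}^{(1)}(\zeta_i(r)) = G_i(\zeta_i(r))^2 \cdot (\text{the }\Gamma_i\text{-restricted sum over }G(e,x|r))$, analogously to~(\ref{CorrespondenceJ2I2}). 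The upshot is that $r^2 I_i^{(1)}(r)$ differs from $\zeta_i(r)^2 I_{G_i}^{(1)}(\zeta_i(r)) \cdot G(r)^2/G_i(\zeta_i(r))^2$ by bounded factors that converge to finite nonzero constants as $r\to R$, so it suffices to control $\frac{d}{dr}\bigl(\zeta_i(r)^2 I_{G_i}^{(1)}(\zeta_i(r))\bigr)$.

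Next I would differentiate. Using the analogue of~(\ref{formulasecondderivative}) for $G_i$, namely $2\zeta I_{G_i}^{(2)}(\zeta) = \zeta G_i(\zeta) + 4\zeta^2 G_i'(\zeta) + \zeta^3 G_i''(\zeta)$ (which is~(\ref{CorrespondenceJ2I2'})), together with the chain rule $\frac{d}{dr}\bigl[\text{(function of }\zeta_i(r))\bigr] = \zeta_i'(r)\cdot(\cdots)$, the derivative $\frac{d}{dr}\bigl(\zeta_i(r)^2 I_{G_i}^{(1)}(\zeta_i(r))\bigr)$ expands into a linear combination of $G_i$, $G_i'$, $G_i''$ at $\zeta_i(r)$, multiplied by $\zeta_i'(r)$. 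Since $G'(R)=\infty$, at least one of $G_1''(\zeta_1(R))$, $G_2''(\zeta_2(R))$ is infinite (this follows from Proposition~\ref{firstdifferential} together with the fact that $G''(R)=\infty$, which holds in the divergent case once one also knows — or one can argue directly — that the $\Phi''$ identity forces it; in any case under our running hypotheses $G_2''(\zeta_2(R))=\infty$). Hence the dominant term in the expansion of $\frac{d}{dr}\bigl(r^2 I_i^{(1)}(r)\bigr)$ is the one carrying $\zeta_i(r)^3 G_i''(\zeta_i(r))$, i.e.\ the term proportional to $\zeta_i'(r)\, I_{G_i}^{(2)}(\zeta_i(r))$, while all terms involving only $G_i$ and $G_i'$ at $\zeta_i$ stay bounded and are therefore negligible. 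Thus $\frac{d}{dr}\bigl(r^2 I_i^{(1)}(r)\bigr) \sim c\,\zeta_i'(r)\, I_{G_i}^{(2)}(\zeta_i(r))$ for a constant $c$, and converting $I_{G_i}^{(2)}(\zeta_i(r))$ back to $I_i^{(2)}(r)$ via~(\ref{CorrespondenceJ2I2}) introduces only bounded factors.

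It then remains to identify $\zeta_i'(r)$ up to the $\sim$ relation with a constant multiple of $G'(r)$ as $r\to R$. From Lemma~\ref{expressionzetaGzeta} one has $\zeta_i(r)G_i(\zeta_i(r)) = \alpha_i r G(r)$; differentiating gives $\zeta_i'(r)\bigl(G_i(\zeta_i(r)) + \zeta_i(r)G_i'(\zeta_i(r))\bigr) = \alpha_i\bigl(G(r) + r G'(r)\bigr)$. The bracket on the left converges to the finite nonzero limit $G_i(\zeta_i(R)) + \zeta_i(R)G_i'(\zeta_i(R))$ (finiteness by~(\ref{G'ifinite})), while the right-hand side is $\alpha_i(G(r) + rG'(r)) \sim \alpha_i R\, G'(r)$ since $G'(r)\to\infty$ and $G(r)\to G(R)$. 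Hence $\zeta_i'(r)\sim C_i\, G'(r)$ for an explicit constant $C_i>0$. Combining the three displays yields $\frac{d}{dr}\bigl(r^2 I_i^{(1)}(r)\bigr) \sim C\, G'(r)\, I_i^{(2)}(r)$, as desired. The main obstacle I anticipate is bookkeeping: carefully tracking which of the many terms produced by the chain-rule differentiation remain bounded (those in $G_i, G_i'$) versus which blow up (those in $G_i''$), and making sure the conversion factors between the $\Gamma$-side quantities and the $\Gamma_i$-side quantities $I_{G_i}^{(k)}$ really do converge to finite nonzero constants — this rests on $G(R)$, $G_i(\zeta_i(R))$ and $G_i'(\zeta_i(R))$ all being finite and positive, which is guaranteed by~(\ref{G'ifinite}) and Kesten/Guivarc'h.
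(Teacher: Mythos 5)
Your route is genuinely different from the paper's. The paper stays entirely on the $\Gamma$-side: it differentiates $r^2I_i^{(1)}(r)=\sum_{x\in\Gamma_i}rG(e,x|r)\,rG(x,e|r)$ term by term, and then uses the projection of an arbitrary $y\in\Gamma$ onto $\Gamma_i$ together with translation invariance to collapse the resulting sum over $y\in\Gamma$ into $\frac{I^{(1)}(r)}{I_i^{(1)}(r)}$ times a sum over $\Gamma_i$, arriving at the exact identity $\frac{d}{dr}(r^2I_i^{(1)}(r))=2r\frac{I^{(1)}(r)}{I_i^{(1)}(r)}I_i^{(2)}(r)$; the asymptotic then follows from $I^{(1)}(r)\sim RG'(r)$ and the finiteness of $I_i^{(1)}(R)$. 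You instead transfer everything to the free factor via $\zeta_i$ and differentiate there, which can be made to work and gives the same constant: by~(\ref{formulaWoess}) and Lemma~\ref{expressionzetaGzeta} one has the \emph{exact} identity $r^2I_i^{(1)}(r)=\alpha_i^{-2}\zeta_i(r)^2I_{G_i}^{(1)}(\zeta_i(r))$, the exact derivative identity $\frac{d}{dt}\bigl(t^2I_{G_i}^{(1)}(t)\bigr)=2tI_{G_i}^{(2)}(t)$ (this is precisely~(\ref{CorrespondenceJ2I2'})), and your computation $\zeta_i'(r)\sim C_iG'(r)$ from differentiating $\zeta_i(r)G_i(\zeta_i(r))=\alpha_irG(r)$ is correct and is the real content of your argument. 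The paper's projection argument is the one that generalizes beyond free products; yours exploits the explicit $\zeta_i$-machinery available here.

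Two steps as written are not sound, though both are repairable. First, the "dominant term" claim — that the $\zeta_i(r)^3G_i''(\zeta_i(r))$ contribution dominates and the $G_i,G_i'$ terms are negligible — is false whenever $G_i''(\zeta_i(R))$ is finite, which is exactly the situation for $i=1$ in the paper's application (and for both $i$ in the spectrally positive recurrent case, which the lemma's hypotheses allow): then $I_{G_i}^{(2)}(\zeta_i(r))$ stays bounded and all terms in the chain-rule expansion are of the same order. You do not need any term to dominate: the expansion is \emph{exactly} $\zeta_i'(r)\cdot 2\zeta_i(r)I_{G_i}^{(2)}(\zeta_i(r))$ by the identity you already cite, so you should replace the dominance argument by this equality. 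Second, reducing to $\frac{d}{dr}\bigl(\zeta_i(r)^2I_{G_i}^{(1)}(\zeta_i(r))\bigr)$ on the grounds that the prefactor "converges to a finite nonzero constant" is not legitimate before differentiating — one cannot differentiate an asymptotic relation, and a prefactor tending to a constant can have a wildly behaved derivative. Here you are saved because Lemma~\ref{expressionzetaGzeta} makes the prefactor \emph{identically} equal to $\alpha_i^{-2}$, but that must be said explicitly. (The aside deducing that some $G_i''(\zeta_i(R))$ is infinite from $G'(R)=\infty$ is both unjustified and unnecessary; drop it.)
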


\begin{proof}
We write
$$r^2I_{i}^{(1)}(r)=\sum_{x\in \Gamma_i}rG(e,x|r)rG(x,e|r).$$
Using~(\ref{formulafirstderivative}) twice,
\begin{align*}
\frac{d}{dr}\left (r^2I_{i}^{(1)}(r)\right )=&\ r\sum_{x\in \Gamma_i}\sum_{y\in \Gamma}G(e,y|r)G(y,x|r)G(x,e|r)\\
&+r\sum_{x\in \Gamma_i}\sum_{y\in \Gamma}G(e,x|r)G(x,y|r)G(y,e|r).
\end{align*}
Fix $i\in \{1,2\}$ and $y\in \Gamma$.
Denote by $z$ the projection of $y$ on $\Gamma_i$.
In other words, $y$ may be written in its normal form as
$$y=zy_1y_2...y_n,$$
where $y_1\in \Gamma_1$ if $z\in \Gamma_2$ and conversely.
Since the random walk is adapted, it has to pass through $z$ before reaching $y$.
In other words,
$$\frac{G(e,y|r)}{G(e,e|r)}=\frac{G(e,z|r)}{G(e,e|r)}\frac{G(z,y|r)}{G(e,e|r)},$$
see also \cite[(3.3)]{WoessMartin}.
Therefore,
$$G(e,y|r)=\frac{1}{G(e,e|r)}G(e,z|r)G(z,y|r)$$
and similarly,
$$G(y,x|r)=\frac{1}{G(e,e|r)}G(y,z|r)G(z,x|r).$$
Thus, setting $\Gamma_z$ to be the set of $y\in \Gamma$ which project on $\Gamma_i$ at $z$, we get
$$\sum_{y\in \Gamma}G(e,y|r)G(y,x|r)=\frac{1}{G(e,e|r)^2}\sum_{z\in \Gamma_i}\sum_{y\in \Gamma_z}G(e,z|r)G(z,y|r)G(y,z|r)G(z,x|r)$$
with
$$\sum_{y\in \Gamma_z}G(z,y|r)G(y,z|r)=\sum_{y\in \Gamma_e}G(e,y|r)G(y,e|r)$$
by invariance by translation by $z$.
In particular, for $x=e$,
\begin{align*}
\sum_{y\in \Gamma}G(e,y|r)G(y,e|r)&=\frac{1}{G(e,e|r)^2}\sum_{z\in \Gamma_i}\sum_{y\in \Gamma_z}G(e,z|r)G(z,e|r)G(z,y|r)G(y,z|r)\\
&=\frac{1}{G(e,e|r)^2}\sum_{z\in \Gamma_i}\sum_{y\in \Gamma_e}G(e,z|r)G(z,e|r)G(e,y|r)G(e,z|r).
\end{align*}
As a consequence,
$$I^{(1)}(r)=\frac{1}{G(e,e|r)^2}\sum_{y\in \Gamma_e}G(e,y|r)G(y,e|r)I_{i}^{(1)}(r)$$
and so for all $z\in \Gamma_i$,
\begin{align*}
\frac{I^{(1)}(r)}{I_{i}^{(1)}(r)}&=\frac{1}{G(e,e|r)^2}\sum_{y\in \Gamma_e}G(e,y|r)G(y,e|r)\\
&=\frac{1}{G(e,e|r)^2}\sum_{y\in \Gamma_z}G(z,y|r)G(y,z|r).
\end{align*}
Combining all this, we get
$$\sum_{y\in \Gamma}G(e,y|r)G(y,x|r)=\frac{I^{(1)}(r)}{I_{i}^{(1)}(r)}\sum_{z\in \Gamma_i}G(e,z|r)G(z,x|r).$$
Similarly,
$$\sum_{y\in \Gamma}G(x,y|r)G(y,e|r)=\frac{I^{(1)}(r)}{I_{i}^{(1)}(r)}\sum_{z\in \Gamma_i}G(x,z|r)G(z,e|r).$$
Consequently,
\begin{align*}
\frac{d}{dr}\left (r^2I_{i}^{(1)}(r)\right )=&\ r\frac{I^{(1)}(r)}{I_{i}^{(1)}(r)}\sum_{x\in \Gamma_i}\sum_{z\in \Gamma_i}G(e,z|r)G(z,x|r)G(x,e|r)\\
&+r\frac{I^{(1)}(r)}{I_{i}^{(1)}(r)}\sum_{x\in \Gamma_i}\sum_{z\in \Gamma_i}G(e,x|r)G(x,z|r)G(z,e|r),
\end{align*}
which we rewrite
$$\frac{d}{dr}\left (r^2I_{i}^{(1)}(r)\right )=2r\frac{I^{(1)}(r)}{I_{i}^{(1)}(r)}I_{i}^{(2)}(r).$$
Since $G'(R)=\infty$,~(\ref{formulafirstderivative}) shows that
$I^{(1)}(r)\sim RG'(r)$ as $r\to R$.
Furthermore, by \cite[Proposition~6.3]{DG21}, the quantity $I_{i}^{(1)}(R)$ is finite.
Thus, we finally get
$$\frac{d}{dr}\left (r^2I_{i}^{(1)}(r)\right )\sim \frac{2R^2}{I_{i}^{(1)}(R)}G'(r)I_{i}^{(2)}(r)$$
as $r\to R$,
which concludes the proof.
\end{proof}

By combining Proposition~\ref{firstdifferential} and Lemma~\ref{seconddifferential}, we get the following statement.
\begin{corollary}\label{thirddifferential}
Assuming that $G'(R)$ is infinite, $G''_1(\zeta_1(R))$ is finite and $G''_2(\zeta_2(R))$ is infinite, there exists $C$ such that
$$\frac{G''(r)}{\left(G'(r)\right)^2}\sim C\frac{d}{dr}\left(r^2I_{2}^{(1)}(r)\right ).$$
\end{corollary}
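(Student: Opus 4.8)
The plan is to chain together the two preceding results. From Proposition~\ref{firstdifferential}, under the hypotheses that $G'(R)=\infty$, $G_1''(\zeta_1(R))<\infty$ and $G_2''(\zeta_2(R))=\infty$, we have the asymptotic $G''(r)\sim C_1 (G'(r))^3 G_2''(\zeta_2(r))$ as $r\to R$, for some constant $C_1$. From Lemma~\ref{seconddifferential} applied with $i=2$, we have $\frac{d}{dr}(r^2 I_2^{(1)}(r))\sim C_2 G'(r) I_2^{(2)}(r)$. So the missing link is to relate $I_2^{(2)}(r)$ to $G_2''(\zeta_2(r))$, after which everything divides through cleanly.

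First I would invoke~(\ref{CorrespondenceJ2I2'}) with $i=2$, which reads $2\zeta_2(r)I_{G_2}^{(2)}(\zeta_2(r))=\zeta_2(r)G_2(\zeta_2(r))+4\zeta_2(r)^2G_2'(\zeta_2(r))+\zeta_2(r)^3G_2''(\zeta_2(r))$. Since $G_2(\zeta_2(R))$ and $G_2'(\zeta_2(R))$ are finite by~(\ref{G'ifinite}) while $G_2''(\zeta_2(r))\to\infty$, the term $\zeta_2(r)^3 G_2''(\zeta_2(r))$ dominates, so $I_{G_2}^{(2)}(\zeta_2(r))\sim \tfrac12 \zeta_2(R)^2 G_2''(\zeta_2(R) \cdot\ldots)$, i.e. $I_{G_2}^{(2)}(\zeta_2(r))\sim \frac{\zeta_2(R)^2}{2}G_2''(\zeta_2(r))$. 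Next I would translate between $I_{G_2}^{(2)}$ and $I_2^{(2)}$ via~(\ref{formulaWoess}): exactly as in the derivation of~(\ref{CorrespondenceJ2I2}), each of the three Green-function factors in the definition of $I_2^{(2)}(r)$ picks up a factor $G(r)/G_2(\zeta_2(r))$, so $I_2^{(2)}(r)=\bigl(G(r)/G_2(\zeta_2(r))\bigr)^3 I_{G_2}^{(2)}(\zeta_2(r))$ — wait, one must be careful with the number of factors; $I_2^{(2)}$ has three Green functions $G(e,x)G(x,y)G(y,e)$, matching the structure of $I_{G_2}^{(2)}$, so the ratio is $(G(r)/G_2(\zeta_2(r)))^3$. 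Since $G(r)\to G(R)$ and $G_2(\zeta_2(r))\to G_2(\zeta_2(R))$ are both finite and positive, this factor converges to a positive constant, hence $I_2^{(2)}(r)\sim C_3 G_2''(\zeta_2(r))$ for some constant $C_3>0$.

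Putting the pieces together: $\frac{d}{dr}(r^2 I_2^{(1)}(r))\sim C_2 G'(r) I_2^{(2)}(r)\sim C_2 C_3 G'(r) G_2''(\zeta_2(r))$, while $G''(r)\sim C_1 (G'(r))^3 G_2''(\zeta_2(r))$, so dividing gives $\frac{G''(r)}{(G'(r))^2}\sim \frac{C_1}{C_2 C_3}\cdot G'(r)\cdot\frac{G_2''(\zeta_2(r))}{G'(r)} \cdot (G'(r))^{-1}\cdot\ldots$ — more carefully, $\frac{G''(r)}{(G'(r))^2}\sim C_1 G'(r) G_2''(\zeta_2(r))$ and $\frac{d}{dr}(r^2 I_2^{(1)}(r))\sim C_2C_3 G'(r)G_2''(\zeta_2(r))$, so the two are asymptotically proportional, which is exactly the claimed statement with $C=C_1/(C_2C_3)$. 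The only mild subtlety — and the step I would double-check most carefully — is the bookkeeping in passing from $I_{G_2}^{(2)}$ to $I_2^{(2)}$ (getting the exponent of $G(r)/G_2(\zeta_2(r))$ right and confirming it tends to a nonzero finite constant), together with confirming that the $G_2''(\zeta_2(r))$ term genuinely dominates in~(\ref{CorrespondenceJ2I2'}); both are routine given~(\ref{G'ifinite}) but are where an off-by-one error would hide. Everything else is a direct substitution of the two cited results.
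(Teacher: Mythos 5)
Your proof is correct and follows essentially the same route as the paper: the paper's own argument also chains Proposition~\ref{firstdifferential} and Lemma~\ref{seconddifferential} through the asymptotic $G_2''(\zeta_2(r))\sim C\,I_2^{(2)}(r)$ obtained from~(\ref{formulasecondderivative}) applied to $G_2$. Your extra bookkeeping (the factor $(G(r)/G_2(\zeta_2(r)))^3$ from~(\ref{formulaWoess}) and the dominance of the $G_2''$ term in~(\ref{CorrespondenceJ2I2'})) is exactly the step the paper leaves implicit, and you carry it out correctly.
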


\begin{proof}
By~(\ref{G'ifinite}), the quantity $G_2'(\zeta_2(R))$ is finite.
Applying~(\ref{formulasecondderivative}) to $G_2$, we get
$$G''_2(\zeta_2(r))\sim CI_{2}^{(2)}(r).$$
The result thus follows from Proposition~\ref{firstdifferential} and Lemma~\ref{seconddifferential}.
\end{proof}

We will also use the following result later on.
\begin{lemma}\label{correspondenceGreenandzeta}
We have
$$\zeta_2(R)-\zeta_2(r)\sim C\bigg(G_2(\zeta_2(R))-G_{2}(\zeta_2(r))\bigg).$$
\end{lemma}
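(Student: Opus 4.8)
The plan is to deduce this from the relationship $\zeta_i(r)G_i(\zeta_i(r)) = \alpha_i r G(r)$ established in Lemma~\ref{expressionzetaGzeta}, combined with the fact that the random walk is spectrally degenerate along $\Gamma_2$ (so $\zeta_2(R) = R_2$ by Lemma~\ref{firstequivalence}). Differentiating the identity $\zeta_2(r)G_2(\zeta_2(r)) = \alpha_2 r G(r)$ with respect to $r$ gives
$$
\zeta_2'(r)\bigl(G_2(\zeta_2(r)) + \zeta_2(r)G_2'(\zeta_2(r))\bigr) = \alpha_2\bigl(G(r) + rG'(r)\bigr).
$$
Since $G'(R) = \infty$, the right-hand side blows up like $\alpha_2 R G'(r)$ as $r\to R$, while on the left-hand side $G_2(\zeta_2(r)) + \zeta_2(r)G_2'(\zeta_2(r)) \to G_2(R_2) + R_2 G_2'(R_2)$, which is finite by~(\ref{G'ifinite}). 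Hence $\zeta_2'(r)\to\infty$ as $r\to R$, and more precisely $\zeta_2'(r) \sim c\, G'(r)$ for an explicit positive constant $c$.

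Next I would relate $G_2(\zeta_2(R)) - G_2(\zeta_2(r))$ to $\zeta_2(R) - \zeta_2(r)$ directly. By the fundamental theorem of calculus,
$$
G_2(\zeta_2(R)) - G_2(\zeta_2(r)) = \int_{\zeta_2(r)}^{\zeta_2(R)} G_2'(s)\, ds,
$$
so since $G_2'$ is continuous and positive up to $\zeta_2(R) = R_2$ with $G_2'(R_2)$ finite, the integrand is bounded away from $0$ and $\infty$ near $R_2$; thus the integral is comparable to the length $\zeta_2(R) - \zeta_2(r)$. To upgrade $\asymp$ to $\sim$, I would use that $G_2'$ is monotone (being a power series with nonnegative coefficients, $G_2'$ is increasing on $[0,R_2]$), so $G_2'(s) \to G_2'(R_2)$ as $s\to R_2^-$, and since $\zeta_2(r)\to R_2$, the mean value of $G_2'$ over $[\zeta_2(r),\zeta_2(R)]$ tends to $G_2'(R_2)$. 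This gives
$$
G_2(\zeta_2(R)) - G_2(\zeta_2(r)) \sim G_2'(R_2)\bigl(\zeta_2(R) - \zeta_2(r)\bigr),
$$
which is exactly the claimed asymptotic with $C = 1/G_2'(R_2)$ (note $G_2'(R_2) = G_2'(\zeta_2(R))$ is finite and nonzero).

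The main subtlety is confirming that $\zeta_2(r)\to\zeta_2(R) = R_2$ as $r\to R^-$ (continuity of $\zeta_2$ at $R$, which is stated in the excerpt after~(\ref{formulaWoess})) and that $G_2'(R_2)$ is strictly positive — the latter is immediate since $G_2$ is nonconstant with nonnegative Taylor coefficients. One should also check that the convergence $\zeta_2(r)\to R_2$ is genuinely one-sided from below, i.e.\ $\zeta_2(r) < R_2$ for $r < R$, which is also recorded in the excerpt. With these in hand the argument is just the averaged fundamental theorem of calculus; I do not anticipate any real obstacle, only the bookkeeping of checking $G_2'$ extends continuously to $R_2$, which follows from~(\ref{G'ifinite}) together with monotonicity and Abel's theorem for the power series $G_2'$.
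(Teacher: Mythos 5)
Your proposal is correct and its core step --- using the finiteness of $G_2'(\zeta_2(R))$ from~(\ref{G'ifinite}) to obtain the first-order expansion $G_2(\zeta_2(R))-G_2(\zeta_2(r))\sim G_2'(\zeta_2(R))\bigl(\zeta_2(R)-\zeta_2(r)\bigr)$, so that $C=1/G_2'(\zeta_2(R))$ --- is exactly the paper's proof, with your FTC-plus-monotonicity argument merely spelling out why the one-sided derivative exists. The opening computation showing $\zeta_2'(r)\sim c\,G'(r)$ is never used afterwards and can be deleted.
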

\begin{proof}
By~(\ref{G'ifinite}), the quantity $G_2'(\zeta_2(R))$ is finite.
Derivating the Green function $G_2$ at $\zeta_2(R)$ yields
$$G_{2}(t)=G_{2}(\zeta_2(R))+G'_{2}(\zeta_2(R))(\zeta_2(R)-t)+o\left(\zeta_2(R)-t\right).$$
Applying this at $t=\zeta_2(r)$ gives the result.
\end{proof}

Everything is now settled to prove Theorem~\ref{maintheoremLLT}.
We treat separately the odd and even cases.

\subsection{The case $d=5$}
We consider the adapted probability measure $\mu_\alpha$ constructed in Section~\ref{sectionconstruction} and we set $\alpha=\alpha_*$ and write $\mu=\mu_\alpha$.
Recall that the measures $\mu_1$ and $\mu_2$ are assumed to be symmetric, admissible, aperiodic and finitely supported on $\Gamma_1=\Z^3$ and on $\Gamma_2=\Z^5$ respectively.
In particular, $R_1=R_2=1$ by \cite[Corollary~8.15]{Woessbook} and
\begin{itemize}
    \item  the random walk is not spectrally degenerate along $\Gamma_1$, so $G_1''(\zeta_1(R))$ is finite,
    \item it is spectrally degenerate along $\Gamma_2$, so $\zeta_2(R)=R_2==1$,
    \item $G_2(1)$ and $G_2'(1)$ are finite but $G_2''(1)$ is infinite,
    \item $G'(R)$ is infinite.
\end{itemize}

Moreover, the function $G''_2(t)$ has the following asymptotic expansion at $1$ :
\begin{equation}\label{asymptoticG''oddcase}
G''_{2}(t)\sim C_1\frac{1}{\sqrt{1-t}}, \text{ as }t\to 1.\ \footnote{This follows from the classical local limit theorem
$\mu_2^{*n}(e)\sim Cn^{-5/2}$
given for instance by \cite[Theorem~13.12]{Woessbook}
and from Karamata's Tauberian theorem \cite[Corollary~1.7.3]{BinghamGoldieTeugels}.
See also \cite[Proposition~17.16]{Woessbook} where the singular expansion at 1 of the Green function is given for simple random walks on $\Z^d$.}
\end{equation}

By applying Corollary~\ref{thirddifferential}, there exists $C_2>0$ such that as $r\to R$,
$$\frac{G''(r)}{\left(G'(r)\right)^2}\sim C_2\frac{d}{dr}\left(r^2I_{2}^{(1)}(r)\right ).$$
Integrating this asymptotic differential equation between $r$ and $R$ and using the fact that $G'(R)=\infty$ yields
\begin{equation}\label{twoterms}
\begin{split}
\frac{1}{G'(r)}\sim &\ C_2 R^2I_{2}^{(1)}(R)-C_2r^2I_{2}^{(1)}(r)\\&=C_2r^2\left (I_{2}^{(1)}(R)-I_{2}^{(1)}(r)\right)+C_2(R^2-r^2)I_{2}^{(1)}(R).
\end{split}
\end{equation}
Indeed,
$$
\frac{1}{G'(r)}=C_2 R^2I_{2}^{(1)}(R)-C_2r^2I_{2}^{(1)}(r)+\int_{r}^Ro\left (\frac{G''(\rho)}{\left(G'(\rho)\right)^2}\right)d\rho.
$$
For every $\epsilon>0$, there exists $r_0$ such that if $r\geq r_0$, we have
$$\left |o\left (\frac{G''(r)}{\left(G'(r)\right)^2}\right)\right |\leq \epsilon \left (\frac{G''(r)}{\left(G'(r)\right)^2}\right)$$
and so
$$\left |\int_{r}^Ro\left (\frac{G''(r)}{\left(G'(r)\right)^2}\right)d\rho\right |\leq \epsilon \frac{1}{G'(r)}.$$

By using~(\ref{formulafirstderivative}) for the Green function $G_{2}$, we get
$$\frac{d}{dt}(tG_{2}(t))=tG'_{2}(t)+G_{2}(t)=\sum_{x\in \Gamma_2}G_{2}(e,x|t)G_{2}(x,e|t).$$
For $t=\zeta_2(r)$, we get
$I_{2}^{(1)}(r)=\zeta_2(r)G'_{2}(\zeta_2(r))+G_{2}(\zeta_2(r))$.
Thus,
\begin{equation}\label{Threeterms}
\begin{split}
I^{(1)}_{2}(R)-I^{(1)}_{2}(r)= &\ \big(1-\zeta_2(r)\big)G'_{2}(1)+\big (G_{2}(1)-G_{2}(\zeta_2(r))\big )\\
&+\zeta_2(r)\left (G'_{2}(1)-G'_{2}(\zeta_2(r))\right),
\end{split}
\end{equation}
with
$$G'_{2}(1)-G'_{2}(\zeta_2(r))=\int_{\zeta_2(r)}^{1} G''_{2}(\rho)d\rho.$$
Therefore, by~(\ref{asymptoticG''oddcase}),
\begin{equation}\label{expansionG'zeta}
G'_{2}(1)-G'_{2}(\zeta_2(r))\sim C_3\sqrt{1-\zeta_2(r)}.
\end{equation}
According to Lemma~\ref{correspondenceGreenandzeta}, the two first terms in the right-hand side of~(\ref{Threeterms}) are of order of magnitude $1-\zeta_2(r)$, while the third one has order of magnitude
$\sqrt{1-\zeta_2(r)}$ by (\ref{expansionG'zeta}).
Consequently,
\begin{equation}\label{expansionG'zeta'}
I^{(1)}_{2}(R)-I^{(1)}_{2}(r)\sim C_4\sqrt{1-\zeta_2(r)}.
\end{equation}
By Lemma~\ref{expressionzetaGzeta}, $\zeta_2(r)G_{2}(\zeta(r))=\alpha_2rG(r)$, so
\begin{align*}
&\alpha_2(G(R)-G(r)) =\frac{\zeta_2(R)}{R}G_2(\zeta_2(R))-\frac{\zeta_2(r)}{r}G_2(\zeta_2(r))\\
&\hspace{.6cm}=\frac{\zeta_2(R)}{R}(G_2(\zeta_2(R))-G_2(\zeta_2(r)))+G_2(\zeta_2(r))\left (\frac{\zeta_2(R)}{R}-\frac{\zeta_2(r)}{r}\right)\\
&\hspace{.6cm}=\frac{\zeta_2(R)}{R}(G_2(\zeta_2(R))-G_2(\zeta_2(r)))+G_2(\zeta_2(r))\left (\frac{1}{R}-\frac{\zeta_2(r)}{r}\right).
\end{align*}
This readily implies
$$G(R)-G(r)\sim C_5(1-\zeta_2(r))+C_6(R-r).$$
Since $G'(R)$ is infinite, $(R-r)=o(G(R)-G(r))$,
hence
\begin{equation}\label{comparisonGG_2}
G(R)-G(r)\sim C_5(1-\zeta_2(r)).
\end{equation}
In particular, $G(R)-G(r)=o(1)$, so applying~\ref{expansionG'zeta'}, we get
$$I^{(1)}_{2}(R)-I^{(1)}_{2}(r)\sim C_7\sqrt{G(R)-G(r)}.$$
The equality $(R-r)=o(G(R)-G(r))$ also yields
$$R^2-r^2=o\left (\sqrt{G(R)-G(r)}\right ),$$
hence by~(\ref{twoterms}),
\begin{equation}\label{oddcasedifferentialequation}
\frac{1}{G'(r)}\sim C_8\sqrt{G(R)-G(r)}.
\end{equation}
Integrating between $r$ and $R$, we have
$$(G(R)-G(r))^{3/2}\sim C_{9}(R-r).$$
Re-injecting this in~(\ref{oddcasedifferentialequation}), we deduce that
\begin{equation}\label{finaldifferentialodd}
G'(r)\sim C_{10}\frac{1}{(R-r)^{1/3}}.
\end{equation}
We can then directly use \cite[Theorem~9.1]{GouezelLalley} to deduce that
$$\mu^{*n}(e)\sim C_{11}R^{-n}n^{-5/3}.$$
This concludes the proof of the case $d=5$ in Theorem~\ref{maintheoremLLT}. \qed

\subsection{The case $d=6$}
The proof for $d=6$ is very similar.
We still have that $R_1=R_2=\zeta_2(R)=1$, $G'(R)=\infty$, $G_1''(\zeta_1(R))<\infty$ and $G''_2(R_2)=\infty$.
We can thus apply Corollary~\ref{thirddifferential}, so that~(\ref{twoterms}) and~(\ref{Threeterms}) again holds in this situation.
Moreover,~(\ref{asymptoticG''oddcase}) is replaced with
\begin{equation}\label{asymptoticG''evencase}
G''_{2}(t)\sim -C_1\log (1-t), \text{ as }t\to 1, \ \footnote{As above, this follows from the classical local limit theorem
$\mu_2^{*n}(e)\sim Cn^{-3}$
given for instance by \cite[Theorem~13.12]{Woessbook}
and from Karamata's Tauberian theorem \cite[Corollary~1.7.3]{BinghamGoldieTeugels}.
See also \cite[Proposition~17.16]{Woessbook} where the singular expansion at 1 of the Green function is given for simple random walks on $\Z^d$.}
\end{equation}

As above, we integrate $G''_{2}$ between $\zeta_2(r)$ and $\zeta_2(R)=1$ to obtain
$$G'_{2}(1)-G'_{2}(\zeta_2(r))=\int_{\zeta_2(r)}^{1} G''_{2}(\rho)d\rho.$$
This time, using~(\ref{asymptoticG''evencase}) yields
$$G'_{2}(1)-G'_{2}(\zeta_2(r))\sim -C_2(1-\zeta_2(r))\log (1-\zeta_2(r)).$$
We deduce from~(\ref{Threeterms}) that
$$I^{(1)}_{2}(R)-I^{(1)}_{2}(r)\sim - C_3(1-\zeta_2(r))\log (1-\zeta_2(r)).$$
Applying~(\ref{comparisonGG_2}) and Lemma~\ref{correspondenceGreenandzeta}, we have
$$I^{(1)}_{2}(R)-I^{(1)}_{2}(r)\sim -C_4(G(R)-G(r))\log (G(R)-G(r)).$$
As above, the fact that $G'(R)$ is infinite yields $(R-r)=o(G(R)-G(r))$, with $G(R)-G(r)=o(1)$,
so
$$R^2-r^2=o\bigg ((G(R)-G(r))\log (G(R)-G(r))\bigg),$$
hence by~(\ref{twoterms}),
\begin{equation}\label{evencasedifferentialequation}
\frac{1}{G'(r)}\sim -C_5(G(R)-G(r))\log (G(R)-G(r)).
\end{equation}
Integrating between $r$ and $R$, we have
\begin{equation}
-(G(R)-G(r))^2\log (G(R)-G(r))\sim C_6(R-r),
\end{equation}
which we rewrite as
$$-(G(R)-G(r))\log (G(R)-G(r))\sim C_6\frac{R-r}{G(R)-G(r)}.$$
Re-injecting this in~(\ref{evencasedifferentialequation}), we deduce that
\begin{equation}\label{evencasedifferentialequationbis}
\frac{1}{G'(r)}\sim C_7\frac{R-r}{G(R)-G(r)}.
\end{equation}
Thus, by integration between $r$ and $R$,
\begin{equation}\label{evencasedifferentialequationter}
\log (G(R)-G(r))\sim C_8 \log (R-r).
\end{equation}
By multiplying~(\ref{evencasedifferentialequation}) and (\ref{evencasedifferentialequationbis}) and using (\ref{evencasedifferentialequationter}), we get
$$\frac{1}{G'(r)^2}\sim -C_{9}(R-r)\log (R-r),$$
i.e.
\begin{equation}\label{finaldifferentialeven}
G'(r)\sim C_{10}\frac{1}{\sqrt{-(R-r)\log (R-r)}}.
\end{equation}

It remains to derive from~(\ref{finaldifferentialeven}) the asymptotic behavior of $\mu^{*n}(e)$. In the previous case when $d= 5$, the estimation~(\ref{finaldifferentialodd}) allowed us to apply directly \cite[Theorem~9.1]{GouezelLalley}, whose proof is based on a version of Karamata's Tauberian Theorem given in \cite{BinghamGoldieTeugels}. Due to the presence of the factor $\log(R-r)$, which does not appear in \cite{GouezelLalley}, we need to detail the proof.

We introduce the power series
$$A(s)=\sum_{n\geq 0}n\mu^{*n}(e)R^ns^n=(sR)G'(sR)$$
whose radius of convergence is 1.
By \cite[Corollary~9.4]{GouezelLalley}, there exists $\beta>0$ such that
\begin{equation}\label{qnmun}
\mu^{*n}(e)R^n=q_n+O\left (\mathrm{e}^{-\beta n}\right ),
\end{equation}
where $q_n$ is an increasing sequence.
It follows from~(\ref{finaldifferentialeven}) that as $s$ tends to 1,
$$\sum_{n\geq0}nq_ns^n\sim C_{11}\frac{1}{\sqrt{-(1-s)\log (1-s)}}.$$
Applying \cite[Corollary~1.7.3]{BinghamGoldieTeugels} to the slowly varying function $s\mapsto (\log s)^{-1/2}$, we get
$$nq_n\sim C_{12}n^{-1/2}\log (n)^{-1/2}.$$
Thus, applying~(\ref{qnmun}),
$$n\mu^{*n}(e)R^n\sim C_{12}n^{-1/2}\log (n)^{-1/2}$$
and so
$$\mu^{*n}(e)\sim C_{12}R^{-n}n^{-3/2}\log (n)^{-1/2}.$$
This concludes the proof of the case $d=6$ in Theorem~\ref{maintheoremLLT}. \qed

\bibliographystyle{plain}
\bibliography{phasetransition}

\end{document}